\documentclass[12pt,reqno]{elsarticle}

\usepackage{amsfonts,color,amsmath,amssymb,fancyhdr}
\usepackage{amsthm}

\textheight 24cm
\textwidth18cm
\hoffset-3truecm
\voffset-2.5truecm

\def\Box{\vcenter{\vbox{\hrule\hbox{\vrule
     \vbox to 8.8pt{\hbox to 10pt{}\vfill}\vrule}\hrule}}}

\newcommand{\F}{\mathbb{F}}

\newcommand{\bS}{\mathbb{S}}
\newcommand{\tr}{\textup{Tr}}
\newtheorem{thm}{Theorem}[section]
\newtheorem{lemma}[thm]{Lemma}

\numberwithin{equation}{section}
\begin{document}
\newcommand{\stopthm}{\begin{flushright}
\(\box \;\;\;\;\;\;\;\;\;\; \)
\end{flushright}}

\newcommand{\symfont}{\fam \mathfam}

\title{On the isotopism classes of the Budaghyan-Helleseth commutative semifields}
\author[add1]{Tao Feng} \ead{tfeng@zju.edu.cn}
\author[add1]{Weicong Li\corref{cor1}}\ead{conglw@zju.edu.cn}
\cortext[cor1]{Corresponding author}
\address[add1]{School of Mathematical Sciences, Zhejiang University, 38 Zheda Road, Hangzhou 310027, Zhejiang P.R China}

\begin{abstract}
In this paper, we completely determine the isotopism classes of  the Budaghyan-Helleseth commutative semifields constructed in [L. Budaghyan, T. Helleseth, New commutative semifields defined by PN multinomials, Crypto. Comm. 3 (1), 2011, p. 1-16].
 \vspace*{3mm}

\noindent \textbf{Keyword:} commutative semifield, Budaghya-Hellsseth family, isotopism, strong isotopism.
\end{abstract}

\maketitle

\section{Introduction}

A finite \textit{presemifield} $\bS$ is a finite ring with no zero-divisors such that both the left and right distributive laws hold. If it further contains a multiplicative identity, then we call $\bS$ a {\it semifield}. A semifield is not necessarily commutative or associative, but by Wedderburn's Theorem \cite{Maclagan1905A}, associativity necessarily implies commutativity in the finite case. The study of semifields was initiated by Dickson \cite{Dickson1906Linear} in the study of division algebras. Knuth \cite{Knuth1965Finite} showed that the additive group of a semifield $\bS$ is an elementary abelian group. Hence, any finite presemifield can be represented by  $(\F_{p^n},\,+,\,\ast)$, where $(\F_{p^n},\,+)$ is the additive group of the finite field $\F_{p^n}$ with $p^n$ elements and $x\ast y= \varphi(x,y)$ with $\varphi$ a bilinear function from $\F_{p^n}\times \F_{p^n}$ to $\F_{p^n}$. For a recent survey on finite semifields, please refer to \cite{lavrauw2011finite}. In this paper, we are concerned with commutative presemifields with odd characteristic. Such presemifields can be equivalently described by planar polynomials of Dembowski-Ostrom type, cf. \cite{Coulter2008Commutative,Coulter1997Planar}.

Let $\bS_1=(\F_{p^n},+,\ast)$ and $\bS_2=(\F_{p^n},\,+,\,\ast')$ be two presemifields. They are {\it isotopic} if there exist three linear permutations $L,\,M,\,N$ over $\F_{p^n}$ such that $L(x\ast y)=M(x)\ast' N(y)$ for all $x,y \in \F_{p^n}$, and we say that $(M,\,N,\,L)$ is an {\it isotopism} between $\bS_1$ and $\bS_2$. If there is an isotopism $(N,\,N,\,L)$ between the presemifields $\bS_1$ and $\bS_2$, then it is a {\it strong isotopism} and $\bS_1$ and $\bS_2$ are {\it strongly isotopic}. In the particular case $\bS_1=\bS_2$, a (strong) isotopism is called a (strong) {\it autotopism}. Both isotopism and strong isotopism define an equivalence relation on the finite presemifields, and the equivalence classes are called  isotopism classes and strong isotopism classes respectively. Two isotopic presemifields are called {\it isotopes} of each other.   For a presemifield $ \bS=(\F_{p^n},+,\ast)$ and its nonzero element $e$, we define a new multiplication $\star$ by $(x\ast e)\star (e\ast y)=x\ast y$.
Then $\bS'=(\F_{p^n},+,\star)$ is a semifield with an identity $e\ast e$, which is strongly isotopic to $\bS$.

In \cite{Budaghyan2008New,Budaghyan2011New}, Budaghyan and Hellsseth constructed two families of commutative presemifileds of order $p^{2k}$ from certain planar functions of Dembowski-Ostrom type over $\F_{p^{2k}}$, where $p$ is an odd prime. They established that the first family is non-isotopic to previously known semifields for $p\neq 3$ and $k$ odd, and determined the middle nuclei  in some special cases. Later on, Bierbrauer \cite{Bierbrauer2011Commutative} observed that the two families are in fact the same and have been  also independently discovered in \cite{Zha2009New}. This family of semifields is now commonly called the Budaghyan-Hellsseth family in the literature. In the same paper, Bierbrauer gave a generalization of the Budaghyan-Hellsseth family which also contains the LMPT-construction \cite{Lunardon2011Symplectic}. The new semifields sometimes are called the LMPTB family, and it is not isotopic to any previously known commutative semifields with the possible exception of the Budaghyan-Hellsseth family as shown in \cite{Bierbrauer2011Commutative}. However, Marino and Polverino  proved that the LMPTB family is contained in the Budaghyan-Hellsseth family in \cite{Marino2011On}, and they determined the nuclei and middle nuclei of the Budaghyan-Hellsseth semifields in \cite{Marino2011On}.

In this paper, we completely determine the isotopism classes of the Budaghyan-Hellsseth presemifields. In Section 2, we introduce some background and preliminary results. We determine the strong isotopism classes of the Budaghyan-Hellsseth presemifields in Section 3, and completely determine the isotopism classes in Section 4. For odd $q$, with the center size $q$ and semifield order $q^{2l}$ fixed, the number of isotopism classes in the Budaghyan-Hellsseth family is equal to
\begin{enumerate}
\item[(i)] $\phi(l)/2$ in the case $q\equiv1\pmod{4}$ and $l>2$ is even;
\item[(ii)] $\phi(l)$ in the case $q\equiv3\pmod{4}$ and $l>2$ is even;
\item[(iii)] $\phi(l)/2$ in the case $l$ is odd;
\end{enumerate}
Here, $\phi$ is the Euler totient function. In the case $l=2$, it is isotopic to a Dickson semifield.

\section{Preliminaries}

We start by introducing some notation that we shall use throughout the paper. Let $p$ be an odd prime, and let $l,\,h,\,d$ be positive integers such that
\[
 1<l,\quad 1\le d\le 2lh-1,\quad \gcd(l,d)=1,\quad\textup{and $l+d$ is odd}.
\]
Set $q=p^h$, and fix a nonsquare $\beta$ of $\F_{q^{2l}}$ and a nonzero element $\omega \in\F_{q^{2l}}$ such that $\omega+ \omega^{q^l}=0$. Let $\tr:\,\F_{q^{2l}}\rightarrow\F_{q^l}$ be the trace function $\tr(x)=x+x^{q^l}$.  We define the following multiplication
\begin{equation}
   x\ast_{(d,\beta)} y= x^{q^l}y+xy^{q^l} + \left(\beta(x^{q^d}y+xy^{q^d})+\beta^{q^l}(x^{q^d}y+xy^{q^d})^{q^l}\right)\,\omega. \label{2.1}
\end{equation}
Then $(\F_{q^{2l}},\,+,\,\ast_{(d,\beta)})$ is a presemifield in the Budaghyan-Helleseth family. This is the simplified version as found in \cite{Marino2012On}. It is clear that different choices of $\omega$ lead to strongly isotopic presemifields. We now show that different choices of $\beta$ also lead to strongly isotopic presemifields.

\begin{lemma}\label{lem_b0b1}
Let $\beta$ and $\beta'$ be two nonsquares in $\F_{q^{2l}}$, and let $d$ be an integer such that $\gcd(d,\,l)=1$ and $l+d$ is odd. Then $\gcd(q^d+1,\,q^l+1)=2$, and there exists nonzero elements $b_0,\,b_1\in\F_{q^{2l}}$ such that
\[
\beta'\beta^{-1}b_0^{q^d+1}\in\F_{q^l},\quad \beta^{1-q^{2l-d}}b_1^{q^{2l-d}+1}\in\F_{q^l}.
\]
\end{lemma}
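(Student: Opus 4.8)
The plan is to prove the gcd statement first by a $2$-adic valuation argument, then recast the two existence claims as statements about the cyclic quotient group $G/K$, where $G=\F_{q^{2l}}^{*}$ and $K=\F_{q^l}^{*}$. Throughout, $v_2$ denotes the $2$-adic valuation.

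First I would establish $\gcd(q^d+1,q^l+1)=2$. Since $l+d$ is odd, exactly one of $l,d$ is even, so $v_2(l)\neq v_2(d)$. If an odd prime $\ell$ divided both $q^d+1$ and $q^l+1$, then $q^d\equiv q^l\equiv -1\pmod{\ell}$, and since $\mathrm{ord}_\ell(q)\mid 2d$ but $\mathrm{ord}_\ell(q)\nmid d$ (and likewise with $l$), one gets $v_2(\mathrm{ord}_\ell(q))=v_2(d)+1=v_2(l)+1$, contradicting $v_2(l)\neq v_2(d)$. Hence $\gcd(q^d+1,q^l+1)$ is a power of $2$; as $q$ is odd it is even, and writing $m$ for the even member of $\{l,d\}$ we have $q^{m}\equiv 1\pmod 8$, so $v_2(q^{m}+1)=1$ and the gcd is exactly $2$.

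Next I would set $G=\F_{q^{2l}}^{*}$, a cyclic group of order $(q^l-1)(q^l+1)$, and take $K=\F_{q^l}^{*}$ to be its unique subgroup of order $q^l-1$, so that a nonzero element lies in $\F_{q^l}$ if and only if it lies in $K$. The quotient $G/K$ is cyclic of order $q^l+1$, and under the projection $\pi\colon G\to G/K$ the set of $(q^d+1)$-th powers of $G$ maps onto the $(q^d+1)$-th powers of $G/K$, which form the subgroup of index $\gcd(q^d+1,q^l+1)=2$, namely the squares of $G/K$. Thus a nonzero $b_0$ with $\beta'\beta^{-1}b_0^{q^d+1}\in\F_{q^l}$ exists precisely when $\pi(\beta\beta'^{-1})$ is a square in $G/K$.

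Finally I would verify the square conditions, which reduce to squareness in $G$ because $\pi$ carries squares of $G$ to squares of $G/K$. As $\beta,\beta'$ are nonsquares of the even-order cyclic group $G$, the product $\beta\beta'^{-1}$ is a square, which settles the first claim. For the second, note $\gcd(2l-d,l)=\gcd(d,l)=1$ and $l+(2l-d)\equiv l+d\equiv 1\pmod 2$, so the gcd result applies verbatim with $d$ replaced by $2l-d$, again giving index $2$; moreover, since $q$ is odd, $x\mapsto x^{q^{2l-d}}$ is an automorphism of $G$ fixing the subgroup of squares, whence $\beta^{q^{2l-d}}$ is a nonsquare and $\beta^{1-q^{2l-d}}=\beta\,(\beta^{q^{2l-d}})^{-1}$ is a square, producing the required $b_1$ in $G$ (so automatically nonzero). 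The main obstacle is the first step, pinning the gcd to exactly $2$ through the $2$-adic valuation of $\mathrm{ord}_\ell(q)$; once that is in hand, the passage to $G/K$ renders both existence statements routine.
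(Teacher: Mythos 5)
Your proof is correct and is essentially the paper's argument in multiplicative dress: working in the cyclic quotient $G/K$ of order $q^l+1$ and asking whether $\pi(\beta\beta'^{-1})$ (resp.\ $\pi(\beta^{1-q^{2l-d}})$) is a square there is exactly the paper's discrete-log computation modulo $q^l+1$ using that $\log(\beta)$ and $\log(\beta')$ are odd. The only substantive addition is that you prove $\gcd(q^d+1,q^l+1)=2$ via the $2$-adic valuation of $\mathrm{ord}_\ell(q)$, whereas the paper cites this from Marino--Polverino; your argument for it is sound.
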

\begin{proof}
The fact $\gcd(q^d+1,\,q^l+1)=2$ has been already proved in Lemma 3.1 (ii) of \cite{Marino2012On}.
We fix a primitive element $\gamma$ of $\F_{q^{2l}}$ and write $\log(\gamma^i):=i\pmod{q^{2l}-1}$. The conditions translate to
\begin{align*}
\log(\beta')-\log(\beta)+(q^d+1)\log(b_0)&\equiv 0\pmod{q^l+1},\\
(1-q^{2l-d})\log(\beta)+(q^{2l-d}+1)\log(b_1)&\equiv 0\pmod{q^l+1}.
\end{align*}
Since $\log(\beta')$ and $\log(\beta)$ are both odd, the existence of the desired $b_0$ and $b_1$ follows.
\end{proof}
Take $\beta$, $\beta'$, $b_0$ and $b_1$ as in Lemma \ref{lem_b0b1}. Set $N_0(x)=b_0x$, $N_1(x)=b_1x$, and
\begin{align*}
L_0(x)&=\frac{1}{2} b_0^{q^l+1}(x+x^{q^l}) +\frac{1}{2}\beta' \beta^{-1}b_0^{q^d+1}(x-x^{q^l}),\\
L_1(x)&=\frac{1}{2} b_1^{q^l+1}(x+x^{q^l})+ \frac{1}{2} \beta^{1-q^{2l-d}}b_1^{q^{2l-d}+1}\omega^{1-q^{l-d}}(x-x^{q^l})^{q^{l-d}}.
\end{align*}
It is straightforward to check that
\begin{align*}
L_0(x\ast_{(d,\beta)} y)= &b_0^{q^l+1}\tr(x^{q^l}y)
+\beta'\beta^{-1}b_0^{q^d+1}\tr(\beta(x^{q^d}y+xy^{q^d}))\, \omega \\
  =&b_0^{q^l+1}\tr(x^{q^l}y)+\tr(\beta'b_0^{q^d+1}(x^{q^d}y+xy^{q^d}))\,\omega\\
  =& N_0(x)*_{(d,\beta')} N_0(y).
\end{align*}
Therefore, in the multiplication \eqref{2.1} different choices of $\beta$ lead to strongly isotropic presemifields. We thus fix a nonsquare $\beta$, write $*_d$ instead of $*_{(d,\,\beta)}$, and use the notation $BH(q,\,l,\,d)$ for $(\F_{q^{2l}},\,+,\,*_d)$. If $q$ and $l$ are fixed and clear from the context, we shall write $\bS_d:=(\F_{q^{2l}},\,+,\,\star_d)$ for the corresponding semifield with multiplication
\begin{equation}\label{2.2}
(1*_d x)\star_d(1*_dy)=x*_d y.
\end{equation}
\begin{lemma}\label{lem_2.2}
For $0<d<2lh-1$ such that $\gcd(l,d)=1$ and $l+d$ is odd, the presemifields $BH(q,\,l,\,d)$ and $BH(q,\,l,\,2l-d)$ are in the same strong isotopism class.
\end{lemma}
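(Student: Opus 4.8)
The plan is to rewrite the multiplication in its trace form and then exhibit an explicit strong isotopism built from a single Frobenius map. Using $x^{q^l}y+xy^{q^l}=\tr(x^{q^l}y)$ and $\beta z+\beta^{q^l}z^{q^l}=\tr(\beta z)$ (with $z=x^{q^d}y+xy^{q^d}$), equation \eqref{2.1} becomes
\[
x*_d y=\tr(x^{q^l}y)+\tr\bigl(\beta(x^{q^d}y+xy^{q^d})\bigr)\,\omega ,
\]
so that every product splits along the direct sum $\F_{q^{2l}}=\F_{q^l}\oplus\omega\F_{q^l}$ (recall $\omega^{q^l}=-\omega$, so $\omega\notin\F_{q^l}$). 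The $\F_{q^l}$-part is $\tr(x^{q^l}y)$ and the $\omega$-part is $\tr(\beta(x^{q^d}y+xy^{q^d}))$, and only the latter depends on $d$.

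First I would take $N(x)=x^{q^d}$ and compute $N(x)*_{2l-d}N(y)$. Since $(x^{q^d})^{q^{2l-d}}=x^{q^{2l}}=x$ on $\F_{q^{2l}}$, the $d$-dependent term collapses to $\tr(\beta(xy^{q^d}+x^{q^d}y))\,\omega$, which is exactly the $\omega$-part of $x*_d y$. The $\F_{q^l}$-part becomes $\tr(x^{q^{d+l}}y^{q^d})=\tr((x^{q^l}y)^{q^d})=\tr(x^{q^l}y)^{q^d}$, where the last equality is the identity $\tr(w^{q^d})=\tr(w)^{q^d}$. Thus, writing $x*_d y=a+b\omega$ with $a,b\in\F_{q^l}$, we obtain $N(x)*_{2l-d}N(y)=a^{q^d}+b\omega$.

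It then remains to correct the Frobenius twist on the $\F_{q^l}$-component. I would define $L$ to act as $a+b\omega\mapsto a^{q^d}+b\omega$ for $a,b\in\F_{q^l}$; explicitly, using $a=\tfrac12(z+z^{q^l})$ and $b\omega=\tfrac12(z-z^{q^l})$, this is the linearized (hence $\F_p$-linear) map
\[
L(z)=\tfrac12\bigl(z+z^{q^d}-z^{q^l}+z^{q^{d+l}}\bigr).
\]
By construction $L(x*_d y)=N(x)*_{2l-d}N(y)$, so $(N,N,L)$ is a strong isotopism provided $N$ and $L$ are bijective. The map $N$ is a power of Frobenius, hence bijective; and $L$ stabilizes each summand of $\F_{q^{2l}}=\F_{q^l}\oplus\omega\F_{q^l}$, acting as $a\mapsto a^{q^d}$ on $\F_{q^l}$ (note $a^{q^d}\in\F_{q^l}$ since $(a^{q^d})^{q^l}=(a^{q^l})^{q^d}=a^{q^d}$) and as the identity on $\omega\F_{q^l}$, so it is bijective as well.

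The computation is essentially routine; the only points requiring care are the bookkeeping of $q$-exponents modulo $2l$ (so that $q^{2l-d}$ is the inverse of $q^d$ on $\F_{q^{2l}}$, which is what makes the $\omega$-parts coincide) and verifying that $L$ genuinely stabilizes each summand so as to be a permutation. The admissibility of the parameter $2l-d$ is immediate: $\gcd(l,2l-d)=\gcd(l,d)=1$, and $l+(2l-d)=3l-d$ is odd because $l-d$ has the same parity as $l+d$. Note that no nonsquare hypothesis on $\beta$ is actually needed here, since both presemifields $BH(q,l,d)$ and $BH(q,l,2l-d)$ are formed with the same fixed $\beta$.
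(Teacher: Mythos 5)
Your proof is correct, but it takes a genuinely different route from the paper. The paper chooses $N_1(x)=b_1x$, a scalar multiplication, where $b_1$ is the element produced by Lemma \ref{lem_b0b1} satisfying $\beta^{1-q^{2l-d}}b_1^{q^{2l-d}+1}\in\F_{q^l}$; the companion map $L_1$ then acts on the decomposition $\F_{q^{2l}}=\F_{q^l}\oplus\omega\F_{q^l}$ by scaling the $\F_{q^l}$-part by $b_1^{q^l+1}$ and twisting the $\omega$-part by the Frobenius $q^{l-d}$ together with the correcting constant $\beta^{1-q^{2l-d}}b_1^{q^{2l-d}+1}\omega^{1-q^{l-d}}\in\F_{q^l}$. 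You instead take $N(x)=x^{q^d}$, which makes the $\omega$-parts of $x*_dy$ and $N(x)*_{2l-d}N(y)$ coincide on the nose (since $(x^{q^d})^{q^{2l-d}}=x$) and leaves only a Frobenius twist $a\mapsto a^{q^d}$ on the $\F_{q^l}$-part, absorbed by your $L(z)=\tfrac12(z+z^{q^d}-z^{q^l}+z^{q^{d+l}})$; I checked the identities $\tr(x^{q^{d+l}}y^{q^d})=\tr(x^{q^l}y)^{q^d}$ and the bijectivity of $L$ on each summand, and they hold. Your argument is self-contained in that it needs neither the existence statement of Lemma \ref{lem_b0b1} nor the observation $\omega^{1-q^{l-d}}\in\F_{q^l}$, and it makes transparent that the nonsquare hypothesis on $\beta$ plays no role here; the paper's version has the mild advantage of treating the change of $\beta$ and the passage $d\mapsto 2l-d$ uniformly via the pair of constants $b_0,\,b_1$ from the same lemma. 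Your remark that $2l-d$ is again an admissible parameter ($\gcd(l,2l-d)=1$ and $3l-d\equiv l+d\pmod 2$) is a worthwhile sanity check that the paper leaves implicit.
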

\begin{proof}
From $\omega+\omega^{q^l}=0$ we deduce that $\omega^{1-q^{l-d}}\in \F_{q^l}$. With $L_1$ and $N_1$ as defined above,
\begin{align*}
L_1(x\ast_d y) = &b_1^{q^l+1}\tr(x^{q^l}y)+
   \beta^{1-q^{2l-d}}b_1^{q^{2l-d}+1}\omega^{1-q^{l-d}}\tr\big( \beta^{q^{2l-d}}(x^{q^{l+d}}y^{q^l}+x^{q^l}y^{q^{l+d}})^{q^{l-d}}\big)\, \omega^{q^{l-d}}\\
  = &b_1^{q^l+1}\tr(x^{q^l}y)+
  \tr\big( \beta  b_1^{q^{2l-d}+1}(xy^{q^{2l-d}}+x^{q^{2l-d}}y) \big)\,\omega\\
  =&N_1(x)*_{2l-d} N_1(y).
\end{align*}
The triple $(N_1,\,N_1,\,L_1)$ is the desired strong isotopism.
\end{proof}

Let $\bS=(\F_{p^n},+,\star)$ be a commutative semifield. Its left nucleus $N_l(\bS)$ and middle nucleus $N_m(\bS)$ are defined as follows:
\begin{align*}
N_l(\bS)&=\{a\in \F_{p^n}:\,(a\star x)\star y=a\star(x\star y)\text{ for all $x,\,y$}\in \F_{p^n}\},\\
N_m(\bS)&=\{a\in \F_{p^n}:\,(x \star a)\star y=x\star(a\star y) \text{ for all $x,\,y$}\in \F_{p^n} \}.
\end{align*}
They are both finite fields, and $N_l(\bS)$ is also called the center of $\bS$ since for a commutative semifield the center and left nucleus coincide. Their sizes are invariants under isotopism. The explicit expressions of the nucleus and middle nucleus of the semifield $\bS_d$ corresponding to $BH(q,\,l,\,d)$ have been determined in \cite[Theorem 4]{Marino2011On}.
\begin{thm}\label{thm_Nm}\cite{Marino2011On}
Let $\bS_d:=(\F_{q^{2l}},\,+,\,\star_d)$ be the semifield with multiplication \eqref{2.2}. Then its center has size $q$, and its middle nucleus has size $q^2$.  For each $\alpha \in N_m(\bS_d)$, there exist $a,\,b\in\F_q$ such that
$(x\ast_d 1) \star_d \alpha = (ax+b\xi x^{q^l}) \ast_d 1$ for all $x\in\F_{q^{2l}}$, where $\xi$ is a constant such that $\beta^{1-q^l}=\xi^{q^{l+d}-1}$.
\end{thm}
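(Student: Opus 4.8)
The plan is to transport the middle‑nucleus condition from the semifield $\bS_d$ back to the presemifield $BH(q,l,d)$ along the additive bijection $\rho(x)=x\ast_d 1=1\ast_d x=\tr(x)+\tr(\beta(x+x^{q^d}))\,\omega$, which by \eqref{2.2} satisfies $\rho(x)\star_d\rho(y)=x\ast_d y$. Writing $M_\alpha$ for multiplication by $\alpha$ in $\bS_d$ and using commutativity, $\alpha\in N_m(\bS_d)$ says exactly that $M_\alpha$ is self‑adjoint with respect to the symmetric bilinear form $(x,y)\mapsto x\star_d y$; conjugating by $\rho$ this becomes the statement that, with $u=\rho^{-1}(\alpha)$, the linear map $\eta_\alpha(x):=\rho^{-1}(u\ast_d x)$ satisfies
\[
\eta_\alpha(s)\ast_d t=s\ast_d\eta_\alpha(t)\qquad\text{for all }s,t.
\]
The assertion to be proved, $(x\ast_d 1)\star_d\alpha=(ax+b\xi x^{q^l})\ast_d 1$, is precisely $\eta_\alpha(x)=ax+b\xi x^{q^l}$.

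I would first record the arithmetic identity $\beta\xi^{q^d}=\beta^{q^l}\xi^{q^l}$, the hinge of the whole computation. Since $l+d$ is odd and $\gcd(l,d)=1$ we have $\gcd(l+d,2l)=1$. Using the defining relation $\beta^{1-q^l}=\xi^{q^{l+d}-1}$ and the identity $(q^l+1)(q^d-1)=(q^l+1)(q^{l+d}-1)-q^d(q^{2l}-1)$, in $\F_{q^{2l}}^{\times}$ we get $\xi^{(q^l+1)(q^d-1)}=\xi^{(q^l+1)(q^{l+d}-1)}=\beta^{(1-q^l)(q^l+1)}=\beta^{1-q^{2l}}=1$, and rearranging $\xi^{(q^l+1)(q^d-1)}=1$ gives $\beta\xi^{q^d}=\beta^{q^l}\xi^{q^l}$. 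The same line, with exponent $q^l+1$ only, shows $(\xi^{q^l+1})^{q^{l+d}-1}=1$, hence $\xi^{q^l+1}\in\F_q$, so that the maps $x\mapsto ax+b\xi x^{q^l}$ are closed under composition.

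Next I would exhibit the $q^2$ nucleus elements. For $a,b\in\F_q$ set $\eta_{a,b}(x)=ax+b\xi x^{q^l}$ and $\alpha_{a,b}=\rho(a+b\xi)$. Decomposing $\F_{q^{2l}}=\F_{q^l}\oplus\F_{q^l}\omega$ (recall $\omega^{q^l}=-\omega$) and splitting each identity into its $\F_{q^l}$‑part and its $\omega$‑part, one checks $(a+b\xi)\ast_d x=1\ast_d(ax+b\xi x^{q^l})$: the $\F_{q^l}$‑parts agree because $\tr(\xi^{q^l}x)=\tr(\xi x^{q^l})$, and the $\omega$‑parts agree exactly by $\beta\xi^{q^d}=\beta^{q^l}\xi^{q^l}$ (here $a^{q^d}=a,\ b^{q^d}=b$ for $a,b\in\F_q$ is used). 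This says $\eta_{\alpha_{a,b}}=\eta_{a,b}$, and the identical two‑component split shows $\eta_{a,b}$ is self‑adjoint for $\ast_d$, whence $\alpha_{a,b}\in N_m(\bS_d)$. Since $\beta\notin\F_{q^l}$ forces $\xi\notin\F_q$, these $q^2$ elements are pairwise distinct. For the center one takes $b=0$: then $a\ast_d x=a\,\rho(x)$, so $\rho(a)\star_d\rho(x)=a\,\rho(x)$ and $\rho(a)$ acts as the scalar $a$, lying in $N_l(\bS_d)$; a short check that the non‑scalar maps $\eta_{a,b}$ ($b\neq0$) fail to commute with every multiplication map then gives $N_l(\bS_d)=\{\rho(a):a\in\F_q\}$, of size $q$.

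The main obstacle is exhaustiveness: that every $\alpha\in N_m$ arises as some $\alpha_{a,b}$, equivalently that the only self‑adjoint maps of the form $x\mapsto\rho^{-1}(u\ast_d x)$ are the $\eta_{a,b}$. The field‑theoretic bound alone is insufficient, since knowing $N_m\supseteq\F_{q^2}$ only forces $|N_m|=q^{2m}$ with $m\mid l$, which a priori allows $m>1$. I would settle it by writing the candidate map as a $q$‑polynomial $\sum_i c_i x^{q^i}$, substituting the explicit $\rho^{-1}$, and imposing self‑adjointness for each of the two component forms $B_1(s,t)=\tr(s^{q^l}t)$ and $B_2(s,t)=\tr(\beta(s^{q^d}t+st^{q^d}))$; comparing the coefficients of the monomials $s^{q^i}t^{q^j}$ that each form produces, and using $\gcd(d,l)=1$ together with the relation on $\xi$, forces every Frobenius exponent other than $0$ and $l$ to cancel and pins the survivors to $c_0=a$, $c_l=b\xi$ with $a,b\in\F_q$. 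This coefficient bookkeeping—tracking which Frobenius powers can match simultaneously under both $B_1$ and $B_2$—is the technical heart, and it yields $|N_m(\bS_d)|=q^2$ and the stated action at once.
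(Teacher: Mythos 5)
The paper does not actually prove Theorem \ref{thm_Nm}: it is quoted from \cite{Marino2011On} (Theorem 4 there), so there is no in-paper argument to compare yours against; I can only assess the proposal on its own terms. Your strategy is sound and every step you carried out checks: the transport along $\rho(x)=x\ast_d1$ correctly converts the middle-nucleus condition into self-adjointness of $\eta_\alpha$ for the $\F_{q^l}$- and $\omega$-components of $\ast_d$; the hinge identity $\beta\xi^{q^d}=\beta^{q^l}\xi^{q^l}$ does follow from $(q^l+1)(q^d-1)=(q^l+1)(q^{l+d}-1)-q^d(q^{2l}-1)$; and the two-component verification of $(a+b\xi)\ast_d x=1\ast_d(ax+b\xi x^{q^l})$ is correct, yielding $q^2$ distinct elements of $N_m(\bS_d)$. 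Two remarks on the parts you left as sketches. First, the exhaustiveness step is cleaner than you fear: setting $t=1$ in $\eta(s)\ast_d t=s\ast_d\eta(t)$ gives $\rho(\eta(s))=\eta(1)\ast_d s$, so \emph{every} additive self-adjoint map is automatically of the form $x\mapsto\rho^{-1}(u\ast_d x)$ with $u=\eta(1)$, and automatically a $q$-polynomial; thus it suffices to solve $T(x)\ast_d y=x\ast_d T(y)$ for $T(x)=\sum_i c_ix^{q^i}$, and the bookkeeping closes quickly --- comparing the coefficient of $x^{q^{i+l}}y$ in $\tr(T(x)^{q^l}y)=\tr(x^{q^l}T(y))$ alone forces $c_i=0$ for $i\neq0,l$, after which the $\omega$-component forces $c_0^{q^d}=c_0$ (hence $c_0\in\F_q$ by $\gcd(d,l)=1$) and $c_l^{q^{l+d}-1}=\beta^{1-q^l}=\xi^{q^{l+d}-1}$ (hence $c_l\in\xi\F_q$ by $\gcd(l+d,2l)=1$). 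Second, the ``short check'' for the center is the only genuinely unproved claim as written; it should be made explicit, e.g.\ by noting that $N_l(\bS_d)\subseteq N_m(\bS_d)$ for a commutative semifield, so that one only has to rule out $\alpha_{a,b}$ with $b\neq0$ from the left nucleus.
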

Throughout the paper, we shall write $\alpha=\kappa(a,b)$, where $\alpha,\,a,\,b$ are defined as in Theorem \ref{thm_Nm}.

\begin{lemma}\label{lemma_bne0}
Take the same notation as in Theorem \ref{thm_Nm}. Then $\xi^{q^l+1}$ is a nonsquare in $\F_q^*$. For $a,\, b\in\F_q$, the element $\alpha=\kappa(a,\,b)$ is a nonsquare in  $N_m(\bS_d)$ if and only if $a^2-b^2\xi^{q^l+1}$ is a nonsquare in $\F_q$. In particular, if $\alpha$ is nonsquare, then $b\neq 0$.
\end{lemma}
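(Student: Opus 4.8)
The plan is to make the field structure of $N_m(\bS_d)$ completely explicit through the parametrization $\kappa$, and then reduce the three assertions to standard facts about the norm map of $\F_{q^2}/\F_q$. For $\alpha=\kappa(a,b)$ write $T_\alpha$ for the $\F_q$-linear map $T_\alpha(x)=ax+b\xi x^{q^l}$ on $\F_{q^{2l}}$, so that Theorem \ref{thm_Nm} reads $(x\ast_d 1)\star_d\alpha=T_\alpha(x)\ast_d 1$. First I would show that $\alpha\mapsto T_\alpha$ is a field isomorphism from $(N_m(\bS_d),\star_d)$ onto its image. Injectivity and $\F_q$-linearity are clear; multiplicativity is where the middle-nucleus property is used: for $\alpha,\alpha'\in N_m(\bS_d)$ the identity $(x\ast_d 1)\star_d(\alpha\star_d\alpha')=\big((x\ast_d 1)\star_d\alpha\big)\star_d\alpha'$ yields $T_{\alpha\star_d\alpha'}=T_{\alpha'}\circ T_\alpha$.

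Second, I would compute this composition explicitly. Since $a,b,a',b'\in\F_q\subseteq\F_{q^l}$ are fixed by the $q^l$-power Frobenius and $x^{q^{2l}}=x$ on $\F_{q^{2l}}$, a direct expansion gives
\[
T_{\alpha'}(T_\alpha(x))=\big(aa'+bb'\xi^{q^l+1}\big)x+\big(ab'+a'b\big)\xi\,x^{q^l}.
\]
Applying this with $\alpha=\alpha'=\kappa(0,1)$ yields the map $x\mapsto\xi^{q^l+1}x$; since $\kappa(0,1)\star_d\kappa(0,1)\in N_m(\bS_d)$, its associated map is $x\mapsto a''x+b''\xi x^{q^l}$ for some $a'',b''\in\F_q$, and comparing the coefficients of the independent functions $x$ and $x^{q^l}$ forces $\xi^{q^l+1}=a''\in\F_q^*$. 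With this in hand the displayed formula becomes the multiplication law
\[
\kappa(a,b)\star_d\kappa(a',b')=\kappa\!\big(aa'+bb'\xi^{q^l+1},\,ab'+a'b\big),
\]
which identifies $(N_m(\bS_d),\star_d)$ with $\F_q[t]/(t^2-\xi^{q^l+1})$ through $\kappa(a,b)\leftrightarrow a+bt$. As $N_m(\bS_d)$ is a field of order $q^2$, this quotient has no zero divisors, so $t^2-\xi^{q^l+1}$ is irreducible over $\F_q$ and $\xi^{q^l+1}$ is a nonsquare, proving the first assertion.

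Finally, I would transport the square/nonsquare question through this isomorphism. Because $\xi^{q^l+1}$ is a nonsquare, Euler's criterion gives $t^{q-1}=(\xi^{q^l+1})^{(q-1)/2}=-1$, so the nontrivial element of $\mathrm{Gal}(\F_{q^2}/\F_q)$ sends $t\mapsto-t$, hence $a+bt\mapsto a-bt$, and the norm of $\kappa(a,b)$ is $(a+bt)(a-bt)=a^2-b^2\xi^{q^l+1}$. Using the standard equivalence that $z\in\F_{q^2}^*$ is a square if and only if $z^{(q^2-1)/2}=1$, equivalently $(z^{q+1})^{(q-1)/2}=1$, i.e. its norm is a square in $\F_q^*$, I conclude that $\kappa(a,b)$ is a nonsquare exactly when $a^2-b^2\xi^{q^l+1}$ is a nonsquare in $\F_q$. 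The last assertion is then immediate: if $b=0$ the norm is $a^2$, a square, so a nonsquare $\alpha$ must have $b\neq0$. The main obstacle is the first step---correctly bookkeeping the Frobenius twists in the composition $T_{\alpha'}\circ T_\alpha$ and invoking the middle-nucleus associativity in the right order---after which everything reduces to the classical norm criterion for squares in a quadratic extension.
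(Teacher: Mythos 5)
Your proof is correct, but it extracts the two conclusions by a genuinely different (and more structural) route than the paper. For the first claim the paper works directly with the defining relation $\beta^{1-q^l}=\xi^{q^{l+d}-1}$: it shows $\xi^{(q^l+1)(q^{l+d}-1)}=1$, uses $\gcd(q^{l+d}-1,q^l-1)=q-1$ to place $\xi^{q^l+1}$ in $\F_q$, and then computes $\xi^{(q^l+1)(q-1)/2}=-1$ from the nonsquareness of $\beta$ and the parity of $l+d$. You instead never touch that relation (beyond $\xi\ne 0$): you observe that $\kappa$ realizes $N_m(\bS_d)$ as $\F_q[t]/(t^2-\xi^{q^l+1})$ and conclude irreducibility, hence nonsquareness, from the fact that the middle nucleus is a field of order $q^2$. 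For the second claim, both arguments rest on the same expansion of $T_{\alpha'}\circ T_\alpha$ (the paper computes it as $\gamma\star_d\gamma$ for $\gamma=\kappa(a_1,b_1)$), but the paper then analyzes solvability of $X^2+Y^2\xi^{q^l+1}=a$, $2XY=b$ by hand, reducing to a quartic and a discriminant case split, whereas you invoke the norm criterion for squares in $\F_{q^2}^*$, which makes the final assertion about $b\neq 0$ immediate. Your version is shorter and reusable (the explicit field structure of $N_m(\bS_d)$ is worth having), at the cost of one point you should state explicitly: Theorem \ref{thm_Nm} only asserts that every $\alpha\in N_m(\bS_d)$ is of the form $\kappa(a,b)$, so to speak of $\kappa(0,1)$ as an element of the middle nucleus you need surjectivity of $(a,b)\mapsto\kappa(a,b)$, which follows from injectivity of $\alpha\mapsto T_\alpha$ (since $u\mapsto u\ast_d 1$ is onto and $\star_d$ has no zero divisors) together with $|N_m(\bS_d)|=q^2$. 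With that remark added, the argument is complete.
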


\begin{proof}
From $\beta^{1-q^l}=\xi^{q^{l+d}-1}$, we deduce that $\xi^{(q^l+1)(q^{l+d}-1)}=1$. Since $\gcd(q^{l+d}-1,q^{l}-1)=q^{\gcd(l+d,l)}-1=q-1$, it follows that $\xi^{q^l+1}$ is in $\F_q$. Moreover, since $\beta$ is a nonsquare and $l+d$ is odd,
\[
-1=\xi^{(q^l+1)(q^{l+d}-1)/2}=\left(\xi^{(q^l+1)(q-1)/2}\right)^{(q^{l+d}-1)/(q-1)}=\xi^{(q^l+1)(q-1)/2}.
\]
It follows that $\xi^{q^l+1}$ is a nonsquare in $\F_q$. This proves the first claim.

For $\alpha=\kappa(a,\,b)$ in $N_m(\bS_d)$, if there exists $\gamma= \kappa(a_1,\,b_1)\in N_m(\bS_d)$ such that $\alpha=\gamma\star_d\gamma$, then by Theorem \ref{thm_Nm} $((x\ast_d 1) \star_d \alpha )= ((x\ast_d 1)\star_d \gamma)\star_d \gamma$ for all $x\in\F_{q^{2l}}$. We deduce that
\begin{align*}
(ax+b\xi x^{q^l}) \ast_d 1
 &= \left((a_1x+b_1\xi x^{q^l})\ast_d 1 \right) \star_d \gamma \\
 &= \left(a_1(a_1x+b_1\xi x^{q^l})+ b_1\xi (a_1x+b_1\xi x^{q^l})^{q^l}\right)\ast_d 1 \\
 &= \left((a_1^2+b_1^2\xi^{q^l+1}) x+2a_1b_1 \xi x^{q^l} \right)\ast_d 1.
\end{align*}
It follows that $a=a_1^2+b_1^2\xi^{q^l+1}$ and $b=2a_1b_1$. Therefore, $\alpha=\kappa(a,b)$ is a nonsquare in $N_m(\bS_d)$ if and only if
$X^2+Y^2\xi^{q^l+1}=a$, $2XY=b$ have no solutions in $\F_q\times\F_q$.	

If $b=0$, then there is always a solution since $\xi^{q^l+1}$ is a nonsquare. Now suppose that $b\ne 0$. We cancel out the variable $Y$ by using $Y=b/(2X)$ and get a quartic equation in $X$: $4X^4-4aX^2+b^2\xi^{q^l+1}=0$. Set $D:=a^2-b^2\xi^{q^l+1}$. We consider two cases.
\begin{enumerate}
\item[(i)] If $D$ is a nonsquare in $\F_q$, then the equation does not have a solution in $\F_q$.
\item[(ii)] If $D$ is a square in $\F_q$, then $4T^2-4aT+b^2\xi^{q^l+1}=0$ has two solutions in $\F_q$ whose product is a nonsquare $\frac{1}{4}b^2\xi^{q^l+1}$ in $\F_q$. One of the solutions is thus a square.
\end{enumerate}
This completes the proof.
\end{proof}

We recall the following characterization of Dickson semifields.
\begin{thm}\label{th_bound}\cite[Theorem 1.1]{Blokhuis2003On}
Let $\bS=(\F_{q^{2n}},+,*)$ be a commutative semifield with center $\F_q$ and middle nucleus $\F_{q^n}$, $q$ odd. If $q\geq 4n^2-8n+2$, then $\bS$ is either a Dickson semifield or a field.
\end{thm}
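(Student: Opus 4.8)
The statement is an external classification result, so I would attack it through the well-developed dictionary between commutative semifields of rank $2$ over their middle nucleus and semifield flocks of a quadratic cone. The first step is to coordinatize: since $\bS$ has order $q^{2n}$ and middle nucleus $K=\F_{q^n}$, it is a rank-$2$ algebra over $K$, and because it is commutative with center $\F_q$ one may put the multiplication on $K\times K$ into the Cohen--Ganley normal form, governed by a pair of $\F_q$-linear maps $f,g$ on $K$. The absence of zero divisors is then equivalent to the discriminant condition that $\Phi(t):=g(t)^2-4t\,f(t)$ is a nonsquare in $K$ for every $t\in K^{*}$. In this language the field corresponds to the linear flock and the Dickson semifields to the Kantor--Knuth flock, i.e.\ (after normalizing) to $g\equiv 0$ together with a monomial $f(t)=c\,t^{q^i}$; so the theorem asserts that for $q$ large these are the only possibilities.

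The core is to deduce from the nonsquare condition that, unless $f,g$ have this monomial shape, a suitable algebraic curve acquires an $\F_q$-rational point, which is forbidden. The device that makes the threshold depend on $q$ rather than $q^n$ is the norm criterion $z\in(\F_{q^n}^{*})^2\iff N_{\F_{q^n}/\F_q}(z)\in(\F_q^{*})^2$, which holds because $z^{(q^n-1)/2}=N_{\F_{q^n}/\F_q}(z)^{(q-1)/2}$. Through it the condition becomes: $N_{\F_{q^n}/\F_q}(\Phi(t))$ is a nonsquare in $\F_q$ for all $t\neq 0$. Since $f,g$ are $\F_q$-linear one has $\Phi(\lambda t)=\lambda^2\Phi(t)$ for $\lambda\in\F_q$, so this condition descends to $\mathrm{PG}(n-1,q)$, and from it I would extract an algebraic curve $\mathcal C$ defined over the center $\F_q$ whose $\F_q$-rational points encode the values $t$ for which $\Phi(t)$ is a square. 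The crucial quantitative input I would then establish is that the genus of $\mathcal C$ is at most $n-1$, exploiting the \emph{linearized} form of $f$ and $g$. Granting this, the Hasse--Weil bound gives $\#\mathcal C(\F_q)\ge q+1-2(n-1)\sqrt q$, which exceeds the few excluded points (the fibre over $t=0$ and the points at infinity) exactly when $q\ge 4n^2-8n+2\approx(2(n-1))^2$, forcing the forbidden rational point and hence a contradiction.

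The contradiction can only be avoided if $\mathcal C$ degenerates, and the final step is to read off from each degenerate configuration that $\Phi$ must factor as a square times a term forcing $g\equiv 0$ and $f$ to be a single monomial $c\,t^{q^i}$ (the Dickson case), or $f$ itself to induce a field. I expect the genuine obstacle to be concentrated entirely in the middle step and to be twofold: first, producing the curve $\mathcal C$ over the \emph{center} $\F_q$ and proving that its genus stays \emph{linear} in $n$ --- a naive curve coming from an $\F_q$-linearized polynomial of $K$-degree $q^{n-1}$ would have genus of order $q^{n-1}$ and be useless, and it is precisely the linear genus bound that makes the threshold quadratic in $n$; and second, the case analysis classifying the degenerations, which must be complete enough to leave only the monomial and linear solutions. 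The Hasse--Weil application itself is routine once the genus bound is in hand.
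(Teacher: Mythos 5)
First, a point of comparison: the paper does not prove this statement at all --- it is quoted verbatim from \cite{Blokhuis2003On} and used as a black box (only the case $n=2$ is ever invoked, to dispose of $BH(q,2,1)$). So there is no internal proof to measure your attempt against; it has to stand on its own as a proof of the external classification result.

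Your setup is correct and standard: the Cohen--Ganley coordinatization of a rank-$2$ commutative semifield over its middle nucleus by $\F_q$-linear maps $f,g$ on $\F_{q^n}$, the equivalence of ``no zero divisors'' with $\Phi(t)=g(t)^2-4tf(t)$ being a nonsquare for all $t\ne 0$, the identification of the field with the linear case and of the Dickson semifields with the Kantor--Knuth (monomial) case, and the norm descent $z\in(\F_{q^n}^{*})^2\iff N_{\F_{q^n}/\F_q}(z)\in(\F_q^{*})^2$. But the entire content of the theorem sits in the step you explicitly leave open, and I do not believe it can be filled along the route you propose. The locus where $N_{\F_{q^n}/\F_q}(\Phi(t))$ is a square lives over the $n$-dimensional $\F_q$-space of $t$'s, so the natural object is a hypersurface (a double cover of $\mathrm{PG}(n-1,q)$ after using $\Phi(\lambda t)=\lambda^2\Phi(t)$), not a curve, and its degree is of order $2q^{n-1}$, since $\Phi$ already has degree $q^{n-1}$ in $t$ and the norm multiplies degrees by a further $q^{n-1}$-sized factor. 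You give no construction of a curve $\mathcal{C}$ over $\F_q$ of genus $O(n)$, and the sentence ``I would then establish that the genus of $\mathcal{C}$ is at most $n-1$'' is essentially the theorem restated as a hope; no such reduction is known. The actual proof in \cite{Blokhuis2003On} is not a Weil-bound argument: it field-reduces, viewing $\{(t,-f(t),g(t)):t\in\F_{q^n}\}$ as an $(n-1)$-dimensional projective $\F_q$-subspace of $\mathrm{PG}(3n-1,q)$ all of whose points lie, under the Desarguesian spread, over internal points of a conic of $\mathrm{PG}(2,q^n)$, and then shows by a combinatorial/B\'ezout-type count on sublines and conics that for $q\ge 4n^2-8n+2$ such a subspace can span only a line over $\F_{q^n}$, which forces the linear or Kantor--Knuth form. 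The numerical agreement of $4n^2-8n+2\approx(2(n-1))^2$ with a hypothetical genus-$(n-1)$ Hasse--Weil bound is suggestive, but as written your argument is a plan whose crucial middle is missing, not a proof.
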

Applying it to the case $n=2$, we deduce that the presemifield $BH(q,\,2,\,1)$  must be isotopic to a Dickson semifield.The consequence have been already considered in Corollary 3 of \cite{Marino2011On}. Therefore, {\bf we shall only consider the case $l>2$ below.} Also, by Lemma \ref{lem_2.2}, we assume without loss of generality that $0<d< l$.

\section{Strong isotopisms among the Budaghyan-Hellsseth presemifields}
In this section, we consider the strong isotopisms among the presemifield $BH(q,l,d)$'s. 
The main result of this section is as follows.
\begin{thm}\label{thm_str}
If $0<d,\,d'\le l-1$, then $BH(q,\,l,\,d)$ and $BH(q,\,l,\,d')$ are strongly isotopic if and only if $d=d'$. Moreover, the strong autotopism group of $BH(q,\,l,\,d)$ has order $4lh(q^l-1)$.
\end{thm}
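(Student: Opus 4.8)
The plan is to reduce the entire statement to an enumeration of the admissible first components $N$. In any strong isotopism $(N,N,L)$ one has $L(x\ast_d y)=N(x)\ast_{d'}N(y)$, and since the products $x\ast_d y$ span $\F_{q^{2l}}$ over $\F_p$, the map $L$ is uniquely determined by $N$; hence counting the strong autotopisms of $BH(q,l,d)$ is the same as counting the $\F_p$-linear permutations $N$ for which a compatible $L$ exists, and deciding whether $BH(q,l,d)$ and $BH(q,l,d')$ are strongly isotopic is the same as deciding whether the corresponding cross equation has a solution $N$. Throughout I will use the trace form $x\ast_d y=\tr(x^{q^l}y)+\tr(\beta(x^{q^d}y+xy^{q^d}))\,\omega$, which shows that $\ast_d$ is $\F_q$-bilinear and involves only the six monomials $x^{q^i}y^{q^j}$ with $(i,j)$ in $\{(l,0),(0,l),(d,0),(0,d),(l+d,l),(l,l+d)\}$.

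First I would establish the lower bound $4lh(q^l-1)$ by exhibiting autotopisms explicitly. A scalar map $N(x)=cx$ turns out to be an autotopism exactly when $c^{(q^l-1)(q^d+1)}=1$: the norm $c^{q^l+1}$ always lies in $\F_{q^l}$, so the only obstruction is the requirement $c^{q^d+1}\in\F_{q^l}$. Using $q^{2l}-1=(q^l-1)(q^l+1)$ together with $\gcd(q^d+1,q^l+1)=2$ from Lemma \ref{lem_b0b1}, the number of such $c$ is $\gcd((q^l-1)(q^d+1),\,q^{2l}-1)=2(q^l-1)$, and these scalars form a normal subgroup of the autotopism group. Next, each Frobenius $x\mapsto x^{p^t}$ conjugates $\ast_d$ into the Budaghyan--Helleseth multiplication built from $(\beta^{p^t},\omega^{p^t})$ with the same $d$; composing with the correcting scalar isotopism of Lemma \ref{lem_b0b1} produces, for every $t$, a strong autotopism of the form $x\mapsto c\,x^{p^t}$. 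Thus the Galois group $\mathrm{Gal}(\F_{q^{2l}}/\F_p)$ of order $2lh$ acts, and together with the scalars it generates a group of order $2lh\cdot 2(q^l-1)=4lh(q^l-1)$.

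The matching upper bound and the classification I would obtain simultaneously by the monomial method. Writing $N(x)=\sum_a n_a x^{p^a}$ and substituting into $L(x\ast_d y)=N(x)\ast_{d'}N(y)$, I expand both sides into monomials $x^{p^A}y^{p^B}$, indexing exponents in $\mathbb{Z}/2lh$ so that Frobenius acts as $+1$ and $q^i$ as $+hi$. On the left the difference $A-B$ always lies in the multiset $\{\pm hl,\pm hd\}$ (with $\pm hd$ doubled); on the right each surviving monomial contributes $A-B=a-b+\delta$ with $\delta\in\{\pm hl,\pm hd'\}$ and $a,b$ in the support of $N$. The decisive step is to show that this support is a singleton, i.e. that $N$ is a monomial $c\,x^{p^t}$. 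Once that is known, matching the difference multisets forces $\{\pm hd\}=\{\pm hd'\}$ in $\mathbb{Z}/2l$, and since $0<d,d'<l$ and $\gcd(d,l)=1$ this yields $d=d'$; matching the actual coefficients of the six monomials then reproduces the condition $c^{(q^l-1)(q^d+1)}=1$, leaving $2(q^l-1)$ choices of $c$ for each of the $2lh$ values of $t$, and hence exactly $4lh(q^l-1)$ autotopisms and no strong isotopism when $d\neq d'$.

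The hard part is precisely the singleton-support claim. If $N$ had two or more exponents in its support, the cross terms with $a\neq b$ would push the differences $a-b+\delta$ outside $\{\pm hl,\pm hd\}$, so the offending monomials would have to cancel in pairs; ruling this out requires tracking the coefficients rather than only the exponents, and exploiting $\gcd(l,d)=1$, the parity condition that $l+d$ is odd, and $\gcd(q^d+1,q^l+1)=2$ to show that no consistent cancellation pattern survives. I expect the bookkeeping of these cancellations — separating genuine coincidences of exponents modulo $2lh$ from accidental ones and handling the doubled difference $\pm hd$ — to be the principal technical obstacle, after which the theorem follows.
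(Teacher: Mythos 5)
Your overall strategy is the same as the paper's: expand $L(x\ast_d y)=N(x)\ast_{d'}N(y)$ into monomials $x^{p^A}y^{p^B}$, show that $N$ must be a monomial, read off $d=d'$ from the difference sets, and count the admissible monomials $c\,x^{p^t}$ via the condition $c^{(q^d+1)(q^l-1)}=1$ (which, using $\gcd(q^d+1,q^l+1)=2$, gives the $2(q^l-1)$ scalars and the total $4lh(q^l-1)$). Your endgame and your counting are correct and agree with the paper. But there is a genuine gap: the singleton-support claim, which you yourself flag as ``the principal technical obstacle,'' is never proved — you only describe what would have to be ruled out. That claim is not a routine bookkeeping exercise to be deferred; it is the entire mathematical content of the paper's Lemmas \ref{lem_3.2}--\ref{lem_3.4}, and without it neither the classification ($d=d'$) nor the upper bound on the autotopism group is established. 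A proof sketch that stops exactly at the hard step is not a proof.

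For comparison, the paper's mechanism for controlling the cancellations is to split the identity into the two components $C_L+C_L^{q^l}$ and $C_L-C_L^{q^l}$ (Eqns.\ \eqref{3.2} and \eqref{3.3}). The first component has right-hand side $2N(x)^{q^l}N(y)+2N(x)N(y)^{q^l}$ with no $L$-coefficients at all, so comparing the coefficient of $x^{p^i}y^{p^{j+lh}}$ there gives the clean relation $b_ib_j^{q^l}=0$ whenever $j-i\not\equiv 0,\,(l\pm d)h \pmod{2lh}$; feeding this back into the second component kills $b_{j+(l-d')h}$ as well (Lemma \ref{lem_3.2}). One then checks, using $\gcd(l,d)=1$ and $l+d$ odd, that the two surviving candidate exponents $i+(l+d)h$ and $i+(l-d)h$ cannot coexist (Lemma \ref{lem_3.3}), and a final coefficient comparison at $x^{p^{j+dh}}y^{p^i}$ eliminates the last non-monomial possibility and forces $d=d'$ (Lemma \ref{lem_3.4}). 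Your sketch does not contain this decomposition or any substitute for it, so as written the argument is incomplete at its central point. (Your lower-bound construction via scalars and Frobenius twists is fine, but it is also the easy half.)
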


We split the proof  into several lemmas. Assume that $BH(q,l,d)$ and $BH(q,l,d')$ are strongly isotopic, where $0<d,\,d'\le l-1$. Then there exists linearized permutations $L(x)$, $N(x)$ such that
\begin{equation} \label{3.1}
   L(x\ast_d y)= N(x)\ast_{d'} N(y).
\end{equation}
Write $L(x)=\sum\limits_{i=0}^{2lh-1} a_i x^{p^i}$ and $N(x)=\sum\limits_{i=1}^{2lh-1}b_ix^{p^i}$, with all coefficients in $\F_{q^{2l}}$. Recall that $q=p^h$. We shall do addition and substraction in the subscripts of $a_i$'s and $b_i$'s, and in that case we understand that the subscripts are read modulo $2lh$. Let $C_L$ (resp. $C_R$) be the left (resp. right) hand side of Eqn. \eqref{3.1}. Then we have
\[
   C_L=L\left(\tr(x^{q^l}y)+\tr\big(\beta(x^{q^d}y+xy^{q^d})\big)\, \omega \right),
\]
and
\begin{equation*}
   C_R=\tr\left(N(x)^{q^l}N(y)\right)+ \tr\left(\beta(N(x)^{q^{d'}}N(y)+N(x)N(y)^{q^{d'}}) \right)\, \omega.
\end{equation*}
We deduce that
\begin{align}
   C_L+C_L^{q^l}& = 2 N(x)^{q^l}N(y)+2N(x)N(y)^{q^l}\label{3.2},\\
   C_L-C_L^{q^l}& =  2\,\tr\left(\beta\big(N(x)^{q^{d'}}N(y)+N(x)N(y)^{q^{d'}}\big)\right)\, \omega . \label{3.3}
\end{align}
These two equations hold for all $x,y\in\F_{q^{2l}}$. Therefore, we regard them as polynomial identities in the quotient ring $\F_{q^{2l}}[X,Y]/(X^{q^{2l}}-X,Y^{q^{2l}}-Y)$. In the sequel, we will understand that $x=\overline{X}$, $y=\overline{Y}$ and $\tr(x^{p^i}y^{p^j})=\overline{X}^{p^i}\,\overline{Y}^{p^j}+\overline{X}^{p^iq^l}\,\overline{Y}^{p^jq^l}$, so  it makes sense to talk about the coefficients of the monomials $x^uy^v$, $0\le u,v\le q^{2l}-1$. A key observation is that if $j-i\pmod{2lh}\not\in\{lh,\,d'h,\,(2l-d')h\}$, then $x^{p^i}y^{p^j}$ has zero coefficients on the left hand sides of Eqns. \eqref{3.2}, \eqref{3.3}.
\begin{lemma}\label{lem_3.2}
Suppose that $b_i\ne 0$. Then $b_{j}=0$ and $b_{j+(l-d')h}=0$ if $j-i \pmod{2lh}  \not\in \{0,\,(l+d)h,\,(l-d)h\}$.
\end{lemma}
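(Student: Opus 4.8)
The plan is to read \eqref{3.2} and \eqref{3.3} as identities in the monomials $x^{p^i}y^{p^j}$ and extract the vanishing of the $b_j$ directly by comparing scalar coefficients. First I would record the consequence of the key observation in the form I need: the left-hand sides of \eqref{3.2} and \eqref{3.3} are built from the $\ast_d$-structure and are therefore supported only on monomials with $j-i\in\{lh,\,dh,\,(2l-d)h\}$ (these differences are produced by $\ast_d$ and preserved both by $L$ and by the $q^l$-Frobenius). Expanding $N(x)=\sum_i b_ix^{p^i}$, the coefficient of $x^{p^{i+lh}}y^{p^j}$ on the right of \eqref{3.2} is $2\big(b_i^{q^l}b_j+b_{i+lh}b_{j-lh}^{q^l}\big)$, and this monomial is forbidden on the left exactly when $j-(i+lh)\notin\{lh,\,dh,\,(2l-d)h\}$, i.e. when $j-i\notin\{0,(l+d)h,(l-d)h\}$ (the three residues being the shifts by $lh$ of the allowed set). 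This yields the two-term relation
\[
b_i^{q^l}b_j+b_{i+lh}b_{j-lh}^{q^l}=0\qquad\big(j-i\notin\{0,(l+d)h,(l-d)h\}\big).
\]

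The one real idea is then a single diagonal substitution. Taking $j=i+lh$ (admissible, since $lh\notin\{0,(l+d)h,(l-d)h\}$ once $0<d<l$) collapses the two terms into $2b_i^{q^l}b_{i+lh}=0$; as $p$ is odd and $b_i\ne0$ this forces $b_{i+lh}=0$. Feeding $b_{i+lh}=0$ back into the displayed relation kills its second term and leaves $b_i^{q^l}b_j=0$, whence $b_j=0$ for every $j$ with $j-i\notin\{0,(l+d)h,(l-d)h\}$. This is the first assertion.

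For the second assertion I would run the same mechanism on \eqref{3.3}. The coefficient of $x^{p^{i+lh}}y^{p^j}$ on its right-hand side is $2\omega$ times
\[
\beta\big(b_{i+(l-d')h}^{q^{d'}}b_j+b_{i+lh}b_{j-d'h}^{q^{d'}}\big)+\beta^{q^l}\big(b_{i-d'h}^{q^{d'+l}}b_{j-lh}^{q^l}+b_i^{q^l}b_{j+(l-d')h}^{q^{d'+l}}\big),
\]
where the congruence $-(d'+l)h\equiv(l-d')h\pmod{2lh}$ identifies the last factor as $b_{j+(l-d')h}$. Substituting the diagonal value $j=i+lh$ again, the first two terms vanish because $b_{i+lh}=0$, while the congruence $2l-d'\equiv -d'\pmod{2l}$ makes the remaining two terms equal, so their sum $2\beta^{q^l}b_i^{q^l}b_{i-d'h}^{q^{d'+l}}=0$ forces $b_{i-d'h}=0$. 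With the three facts $b_{i+lh}=0$, $b_{i-d'h}=0$ and $b_j=0$ now available, for a general admissible $j$ the first term of the four-term identity dies by $b_j=0$, the second by $b_{i+lh}=0$, and the third by $b_{i-d'h}=0$; only $\beta^{q^l}b_i^{q^l}b_{j+(l-d')h}^{q^{d'+l}}$ survives, and since $b_i\ne0$ this gives $b_{j+(l-d')h}=0$.

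The genuinely delicate part is not any one computation but the exponent bookkeeping modulo $2lh$: I must track how the $q^l$- and $q^{d'}$-twists move the indices and verify the three coincidences $2lh\equiv0$, $-(d'+l)h\equiv(l-d')h$, and $2l-d'\equiv -d'$ that make the two diagonal substitutions collapse and that convert the bilinear cancellation conditions into the outright vanishings $b_{i+lh}=0$ and $b_{i-d'h}=0$. I would also check at the outset that $\{0,(l+d)h,(l-d)h\}$ consists of three distinct residues (using $0<d<l$), so that the admissible range of $j$ is nonempty and the substitution $j=i+lh$ is legitimate; everything else then follows by the back-substitutions described above.
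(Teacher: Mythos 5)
Your proof is correct and takes essentially the same route as the paper's: you compare coefficients of monomials $x^{p^u}y^{p^v}$ with $u-v$ outside the support set of $C_L$ in Eqns.~\eqref{3.2} and \eqref{3.3}, use the diagonal choice $j=i+lh$ to force $b_{i+lh}=0$ and $b_{i-d'h}=0$, and then back-substitute into the general two-term and four-term identities. (Incidentally, your reading of the ``key observation'' with support set $\{lh,\,dh,\,(2l-d)h\}$, i.e.\ with $d$ rather than the $d'$ printed in the paper, is the correct one.)
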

\begin{proof}
We compare the coefficients of $x^{p^i}y^{p^{i}}$  in Eqns. \eqref{3.2}, \eqref{3.3} and get
\[
     4b_i b_{i+lh}^{q^l}=0,\quad \beta b_{i-d'h}^{q^{d'}}b_{i}+\beta^{q^l}b_{i+(l-d')h}^{q^{d'+l}}\, b_{i+lh}^{q^l}=0.
\]
It follows easily that $b_{i+lh}=0$ and $b_{i-d'h}=0$. Let $j$ be an integer as in the statement of the lemma.
Comparing the coefficients of $x^{p^i}y^{p^{j+lh}}$  in Eqns. \eqref{3.2}, \eqref{3.3}, we get $b_{i}b_{j}^{q^l}=0$ and
$\beta b_{i}b_{j+(l-d')h}^{q^{d'}}+\beta^{q^l}b_{i+(l-d')h}^{q^{l+d'}}b_{j}^{q^l}=0$.
It follows that $b_{j}=0$ and $b_{j+(l-d')h}=0$. This completes the proof.
\end{proof}

\begin{lemma}\label{lem_3.3}
We have $N(x)=b_i x^{p^i}+b_{j} x^{p^{j}}$ for some integers $i,\,j$ such that  $j-i\pmod{2lh} \in \{(l+d)h,\,(l-d)h\}$.
\end{lemma}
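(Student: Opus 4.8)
The plan is to read Lemma \ref{lem_3.2} purely as a constraint on the \emph{support} $S:=\{\,i:\,b_i\neq 0\,\}$ of $N$, and to show that $S$ can contain at most two elements whose difference is forced. Writing $A:=(l+d)h$, the first thing to record is that $(l-d)h\equiv -A\pmod{2lh}$, since $(l+d)h+(l-d)h=2lh$. With this notation, the part of Lemma \ref{lem_3.2} that gives $b_j=0$ says exactly that for every $i\in S$ one has $b_j=0$ whenever $j-i\pmod{2lh}\notin\{0,A,-A\}$; equivalently,
\[
S\subseteq\{\,i,\;i+A,\;i-A\,\}\qquad\text{for each }i\in S .
\]
(The auxiliary conclusion $b_{j+(l-d')h}=0$ of Lemma \ref{lem_3.2} is not needed here.) So the support is already confined to a short window around any one of its elements, and it only remains to exclude the third exponent.

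The heart of the argument is a purely arithmetic observation about $A$ modulo $2lh$, namely $A\not\equiv 0$, $2A\not\equiv 0$ and $3A\not\equiv 0\pmod{2lh}$. The first two are immediate from $0<d<l$, since then $l<l+d<2l$ and $d\not\equiv 0\pmod l$. The third, which amounts to $3(l+d)\not\equiv 0\pmod{2l}$, is the step I expect to be the real crux: it follows from a parity count, because $l+d$ is odd by hypothesis, so $3(l+d)$ is odd and cannot be divisible by the even number $2l$. This is precisely where the standing assumption ``$l+d$ odd'' is used, and it is essential: without it $\{0,A,-A\}$ could close up into an order-$3$ subgroup and a genuine three-term $N$ would be possible.

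Granting these facts, the conclusion is a one-line pigeonhole step. Choose $i\in S$ (which exists as $N\neq 0$) and suppose, for contradiction, that both $i+A$ and $i-A$ lie in $S$. Applying the window inclusion at the element $i+A$ gives $S\subseteq\{\,i+A,\,i+2A,\,i\,\}$, while $i-A\in S$; comparing the two memberships yields $-A\in\{A,2A,0\}$, i.e. $2A\equiv 0$, $3A\equiv 0$ or $A\equiv 0$, each excluded above. Hence at most one of $i\pm A$ belongs to $S$, so $|S|\le 2$, and when $|S|=2$ the two exponents differ by $A=(l+d)h$ or by $-A\equiv(l-d)h$, which is the asserted form of $N$. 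The degenerate possibility $|S|=1$ genuinely occurs (for instance the scalar maps $x\mapsto bx$ with $b\in\F_q^*$ are strong autotopisms, giving a single-term $N$), and it is absorbed into the statement $N(x)=b_ix^{p^i}+b_jx^{p^j}$, $\,j-i\in\{(l+d)h,(l-d)h\}$, simply by allowing one of the two coefficients to vanish.
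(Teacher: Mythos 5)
Your proof is correct and takes essentially the same route as the paper's: confine the support of $N$ to $\{i,\,i+(l+d)h,\,i+(l-d)h\}$ via Lemma \ref{lem_3.2}, then rule out a three-term $N$ by checking that the difference of the two outer exponents cannot lie in the allowed set. Your conditions $A,2A,3A\not\equiv 0\pmod{2lh}$ are exactly the paper's verification that $2dh\not\in\{0,(l+d)h,(l-d)h\}$, with the parity of $l+d$ doing the decisive work in both versions.
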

\begin{proof}
Assume that $b_i\ne0$. By Lemma \ref{lem_3.2}, $b_j\ne0$ only if $j-i\pmod{2lh}\in  \{0,\,lh+dh,\,lh-dh\}$. It is straightforward to check that $(i+lh+dh)-(i+lh-dh)=2dh\not\in\{0,\,lh+dh,\,lh-dh\}$ by the fact that $\gcd(l,\,d)=1$ and $l+d$ is odd. Hence $b_{i+(l+d)h}$ and $b_{i+(l-d)h}$ can not both be nonzero by the same lemma. This completes the proof.
\end{proof}

\begin{lemma}\label{lem_3.4}
We have $d=d'$, and $N(x)$ is a monomial.
\end{lemma}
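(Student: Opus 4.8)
The plan is to build on Lemma~\ref{lem_3.3}, which tells us that $N(x)=b_ix^{p^i}+b_jx^{p^j}$ with $j-i\pmod{2lh}\in\{(l+d)h,(l-d)h\}$. After relabeling I may assume $N(x)=b_ix^{p^i}+b_jx^{p^j}$ with $j=i+(l-d)h$ (the case $j-i=(l+d)h$ should be symmetric, or reducible to it using Lemma~\ref{lem_2.2} which relates $d$ to $2l-d$). The goal is twofold: first to force $d=d'$, and then to force one of $b_i,b_j$ to vanish so that $N$ is a monomial. Both should fall out of comparing more coefficients in Eqns.~\eqref{3.2} and \eqref{3.3}, now that the support of $N$ is known to consist of at most two terms.

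First I would substitute the two-term $N(x)$ into the right-hand sides of \eqref{3.2} and \eqref{3.3} and expand. The left-hand sides have a very restricted monomial support: by the key observation stated before Lemma~\ref{lem_3.2}, a monomial $x^{p^a}y^{p^b}$ can appear on the left only when $b-a\pmod{2lh}\in\{lh,\,d'h,\,(2l-d')h\}$. On the right-hand side, plugging in $N(x)=b_ix^{p^i}+b_jx^{p^j}$ produces cross terms whose exponent differences $b-a$ are determined by $i,j$ and the shifts $lh,\,d'h,\,(2l-d')h$ coming from the multiplication $\ast_{d'}$. I would tabulate the exponent differences of all the cross terms that actually occur and match them against the allowed set $\{0,(l+d)h,(l-d)h\}$ inherited from $N$ together with the constraint coming from $\ast_{d'}$. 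The arithmetic here, with $j-i=(l-d)h$, should pin down $d'$ in terms of $d$: the only way the shift introduced by $\ast_{d'}$ can be consistent with the two-term support of $N$ is $d'=d$ (or $d'=2l-d$, which the hypothesis $0<d,d'\le l-1$ together with Lemma~\ref{lem_2.2} rules out or identifies with $d=d'$).

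Once $d=d'$ is established, I would return to the coefficient equations and isolate the monomials that can only arise from a genuine cross term $b_ib_j^{q^s}$ (for the appropriate Frobenius power $s$) in the expansion of $N(x)^{q^{d}}N(y)+\cdots$. Because the left-hand sides of \eqref{3.2} and \eqref{3.3} have zero coefficient on any monomial whose exponent difference lies outside the allowed set, and the cross terms produced by a truly two-term $N$ land (for at least one monomial) outside that set, matching coefficients should yield an identity of the form $b_ib_j=0$ or $b_ib_j\cdot(\text{nonzero})=0$, forcing $b_i=0$ or $b_j=0$. This is exactly the mechanism used in the proof of Lemma~\ref{lem_3.2}, where pairs of equations of the form $b_ab_b^{q^l}=0$ together with a $\beta$-twisted companion kill off unwanted coefficients; I would reuse that pattern here on the surviving two-term support.

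The main obstacle I anticipate is the bookkeeping in the second step: with $N$ having two terms, the products $N(x)^{q^l}N(y)$, $N(x)N(y)^{q^l}$ and the $\ast_{d'}$-parts each contribute four monomials, and one must carefully track all exponent differences modulo $2lh$ and verify that the cross-term monomial one wants to exploit is genuinely \emph{not} cancelled by any other contribution and genuinely lies outside the allowed difference set $\{lh,d'h,(2l-d')h\}$. The nontriviality of this exclusion rests on the number-theoretic facts already in play --- $\gcd(l,d)=1$, $l+d$ odd, and consequently $2dh\notin\{0,(l\pm d)h\}\pmod{2lh}$ --- so I would make these separations explicit as congruence inequalities before reading off the vanishing. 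Establishing $d=d'$ cleanly in the first step may require separately handling the subcase $j-i=(l+d)h$, but I expect symmetry (or a direct appeal to Lemma~\ref{lem_2.2}) to collapse it to the case treated above.
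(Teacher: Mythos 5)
Your plan is essentially the paper's proof: the paper obtains $d'=d$ by comparing the coefficients of $x^{p^{i+d'h}}y^{p^i}$ and $x^{p^{j+d'h}}y^{p^j}$ in Eqn.~\eqref{3.3}, which (when $d'h$ lies outside the allowed difference set) forces $\beta b_i^{q^{d'}+1}=\beta b_j^{q^{d'}+1}=0$ and hence $N\equiv 0$, and it then kills the cross term by taking $j=i+(d+l)h$ and reading off the coefficient of $x^{p^{j+dh}}y^{p^i}$, whose exponent difference $(2d+l)h$ is excluded, yielding $\beta b_j^{q^d}b_i=0$. Two cautions on the execution: the allowed difference set for the left-hand sides of \eqref{3.2}--\eqref{3.3} is $\{lh,\,dh,\,(2l-d)h\}$ --- it comes from $L(x\ast_d y)$ and so involves $d$, not $d'$ as the displayed ``key observation'' literally says; if you matched against the $d'$-version, your first step would be vacuous, since the difference $d'h$ produced by $\ast_{d'}$ would automatically be allowed. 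Also, the two cases $j-i=(l\pm d)h$ are exchanged simply by swapping the labels $i$ and $j$, because $-(l+d)h\equiv(l-d)h\pmod{2lh}$, so no appeal to Lemma~\ref{lem_2.2} is needed.
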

\begin{proof}
If $d'\ne d$, then $d'h\not\in\{lh,\,dh,\,(2l-d)h\}$, and by examining the coefficients of $x^{p^iq^{d'}}y^{p^i}$ and $x^{p^jq^{d'}}y^{p^j}$ in Eqn. \eqref{3.3} we see that $\beta b_i^{q^{d'}+1}=\beta b_j^{q^{d'}+1}=0$, so $b_i=b_j=0$ and $N(x)\equiv 0$: a contradiction. This proves that $d'=d$.
By interchanging $i,\,j$ if necessary, we assume that $j=i+(d+l)h\pmod{2lh}$ without loss of generality. As before, we can check that $j+dh-i= (2d+l)h  \pmod{2lh}\not\in \{lh,\,dh,\, (2l-d)h\}$, and the coefficients of ${x^{p^{j+dh}}}y^{p^i}$ in Eqn. \eqref{3.3} yields that $\beta b_j^{q^d}b_i =0$. Hence $N(x)$ is a monomial.
\end{proof}

Here we are in position to complete the proof. Assume that $N(x)=b x^{p^i}$, $0\le i\le 2lh-1$. By expanding \eqref{3.1}, we have
$$L(\tr(x^{q^l}y))+L(\tr(\beta(x^{q^d}y+xy^{q^d}))\omega)= b^{q^l+1} Tr(x^{q^l}y)+ Tr(\beta b^{q^d+1}(x^{q^d}y+xy^{q^d}))\omega.$$
By comparing the coefficients, we see that it holds if and only if
 \[ a_{j}+a_{j+lh}= \left\lbrace
 \begin{array}{ll}
   b^{q^l+1}, & \textup{if } j=i ,\\
   b^{q^l+1}, & \textup{if } j=i+lh, \\
   0 ,& \text{otherwise. }\\
\end{array}\right.    \quad a_j-a_{j+lh}=\left\lbrace \begin{array}{ll}
  b^{q^d+1} ,&\textup{if } j=i ,\\
  -b^{q^{d+l}+q^l} &\textup{if } j=i+lh, \\
  0 , & \text{otherwise. } \\
\end{array} \right.\]
 It follows that $a_j=a_{j+lh}=0$ for $j\neq i \pmod{lh}$,\, $b^{q^d+1}=b^{q^{d+l}+q^l}$ ,i.e. $b^{(q^d+1)(q^l-1)}=1$, and 
\[
L(x)=\frac{1}{2}b^{q^l+1} (x+x^{q^l})^{p^i}+\frac{1}{2}b^{q^d+1}(x-x^{q^l})^{p^i}.
\]
Since $\gcd(q^d+1,q^l+1)=2$ by Lemma \ref{lem_b0b1}, we have $b^2\in\F_{q^l}$. Recall that $\omega^{q^l}= -\omega$, we deduce that $b \in \F_{q^l}^*\cup \F_{q^l}^*\, \omega$. Then the map $x\mapsto N(x)$ is a permutation of $\F_{q^{2l}}$, and $x\mapsto L(x)$ is automatically a permutation since Eqn. \eqref{3.1} holds. Therefore, we get all the desired strong isotopism $(N,\,N,\,L)$'s in this way. This completes the proof of Theorem \ref{thm_str}.

 \section{Isotopism calsses of the Budaghyan-Hellsseth presemifields}
 This section is devoted to the proof of the following main theorem.

\begin{thm}\label{thm_iso}
If $0<d,\,d'<l$, then the presemifields $BH(q,\,l,\,d)$ and $BH(q,\,l,\,d')$ are isotopic if and only  either (i) $d'=d$, or (ii) $q\equiv1\pmod 4$, $d'=l-d$ and $l$ is even.
\end{thm}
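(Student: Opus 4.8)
The plan is to proceed by analogy with the strong-isotopism analysis of Section 3, but now allowing a general isotopism $(M,N,L)$ rather than a strong one. So I would start from the defining equation $L(x\ast_d y)=M(x)\ast_{d'}N(y)$, write $M(x)=\sum_i c_i x^{p^i}$, $N(x)=\sum_i b_i x^{p^i}$, $L(x)=\sum_i a_i x^{p^i}$, and split the identity into its $\tr$-fixed and $\tr$-anti-fixed parts exactly as in \eqref{3.2}--\eqref{3.3}, yielding
\begin{align*}
C_L+C_L^{q^l}&=2\big(M(x)^{q^l}N(y)+M(x)N(y)^{q^l}\big),\\
C_L-C_L^{q^l}&=2\,\tr\!\big(\beta(M(x)^{q^{d'}}N(y)+M(x)N(y)^{q^{d'}})\big)\,\omega.
\end{align*}
The key structural observation of Lemma \ref{lem_3.2} should again apply: the left-hand sides only carry monomials $x^{p^i}y^{p^j}$ with $j-i\pmod{2lh}\in\{0,(l+d)h,(l-d)h\}$, which forces the supports of $M$ and $N$ to be very sparse. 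I expect the upshot to be that $M$ and $N$ are each supported on at most two exponents whose difference is $(l\pm d)h$, and that comparing the $d'$-related monomials will again force a relation between $d$ and $d'$. The crucial new feature is that $M$ and $N$ need no longer be equal, which is precisely what opens the door to the extra isotopisms in case (ii).

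Next I would do the exponent bookkeeping to pin down the admissible pairs $(d,d')$. The sufficiency of case (i) is Theorem \ref{thm_str} (strong isotopes are isotopes). For the ``if'' direction of case (ii), I would exhibit an explicit isotopism between $BH(q,l,d)$ and $BH(q,l,l-d)$: take $M,N$ to be suitable monomials $x\mapsto c\,x^{p^i}$, $x\mapsto b\,x^{p^j}$ (no longer with $M=N$) and verify \eqref{3.1} directly by a trace computation, the same style of calculation as in Lemmas \ref{lem_b0b1} and \ref{lem_2.2}. The condition $q\equiv1\pmod4$ and $l$ even should enter exactly when one solves the multiplicative constraints $c^{q^l+1},b^{q^l+1},\dots\in\F_{q^l}$ together with the nonsquare condition on $\beta$: this is where a solvability criterion modulo $q^l+1$ (an $\gcd(q^d+1,q^l+1)=2$ type argument, already available) interacts with the parity of $l$ and the residue of $q$ mod $4$. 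I would isolate this as a short arithmetic lemma so the main proof stays clean.

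For the ``only if'' direction I would carry the support analysis to its conclusion: after Lemma \ref{lem_3.2}-style reductions, $M(x)=c\,x^{p^i}$ and $N(x)=b\,x^{p^j}$ are monomials, and substituting back into \eqref{3.2}--\eqref{3.3} forces $i\equiv j\pmod{lh}$ together with either $d'=d$ or $d'=l-d$. When $d'=l-d$ the matching of the anti-fixed part \eqref{3.3} produces a compatibility condition on $\beta$, $c$, $b$ of the form $\beta^{\,\text{something}}=(\text{square})\cdot c^{\ast}b^{\ast}$; taking norms into $\F_q$ and using that $\beta$ is a nonsquare with $l+d$ odd should collapse this to the stated congruence $q\equiv1\pmod4$ and the parity constraint on $l$. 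The main obstacle, I expect, is precisely this last step: controlling the solvability of the mixed multiplicative equations over $\F_{q^{2l}}$ — deciding for which $(q,l,d)$ the required $c,b$ exist — since here the two maps are independent and one must track several nonsquare/parity conditions simultaneously rather than the single condition that sufficed for strong isotopy. I would handle it by reducing everything to discrete logarithms modulo $q^l+1$ and $q-1$, as in the proof of Lemma \ref{lem_b0b1}, where the oddness of $\log(\beta)$ was the decisive input.
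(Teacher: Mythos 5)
Your plan has a genuine gap at its core: you propose to run the Section~3 coefficient analysis on $L(x\ast_d y)=M(x)\ast_{d'}N(y)$ with $M$ and $N$ fully independent, but the mechanism that drives Lemmas~\ref{lem_3.2}--\ref{lem_3.4} breaks down there. In the strong case the coefficient of $x^{p^i}y^{p^j}$ on the right-hand side of \eqref{3.2} is (up to Frobenius twists) a single product $b_ib_j^{q^l}$ plus its mirror, and setting it to zero kills individual coefficients; with independent $M(x)=\sum c_ix^{p^i}$ the same coefficient is $c_{i+lh}^{q^l}b_j+c_ib_{j+lh}^{q^l}$, a genuine bilinear relation from which you cannot conclude that any single $b_j$ or $c_j$ vanishes. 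The paper avoids this by first invoking the Coulter--Henderson reduction (Theorem~\ref{thm_alpha}): an isotopism of commutative semifields may be assumed either strong or of the form $M=\alpha\star_d N$ with $\alpha$ a nonsquare of the middle nucleus. Combined with the explicit action of $N_m(\bS_d)$ from Theorem~\ref{thm_Nm} and the fact $b\ne0$ from Lemma~\ref{lemma_bne0}, this turns the left-hand side into $L'(a\cdot x\ast_d y)+L'(b\cdot(\xi x^{q^l})\ast_d y)$ with only two scalar parameters $a,b\in\F_q$, and restores a right-hand side of the form $N'(x)\ast_{d'}N'(y)$ to which a (modified) support analysis applies. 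Without some substitute for this reduction your ``only if'' direction stalls at the first coefficient comparison.

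A second, related error: you predict that $M$ and $N$ collapse to monomials and that the conditions ``$l$ even, $q\equiv1\pmod 4$'' emerge from solving multiplicative norm equations for monomial coefficients. In fact, in the non-strong case $N'$ is forced to be a \emph{binomial} $b_ix^{p^i}+b_{i+lh}x^{p^{i+lh}}$ with $b_{i+lh}=b_i\xi^{p^i}\gamma_i$, where $\gamma_i^{p^{-i}}$ is a root of $\xi^{q^l+1}X^2-2ab^{-1}X+1$ whose discriminant is a nonsquare (Lemma~\ref{lem_gamma}); thus $\gamma_i\in\F_{q^2}\setminus\F_q$, and since also $\gamma_i\in\F_{q^l}^*$ this is exactly what forces $l$ even. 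The congruence $q\equiv1\pmod4$ then comes from showing $a=0$ and combining the nonsquareness of $-b^2\xi^{q^l+1}$ with $\xi^{(q^l+1)(q^{l+d}-1)/2}=\beta^{(q^{2l}-1)/2}=-1$. A monomial ansatz would never produce either condition, so even the shape of your intended endgame is off. Your treatment of the sufficiency of case (ii) (an explicit isotopism, verified by a trace computation in the style of Lemma~\ref{lem_2.2}) is in the right spirit, but there too the paper's witness has $N'(x)=x+\xi^{(q-1)/2}x^{q^l}$, a binomial, not a monomial.
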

We again split the proof into several lemmas.  First we introduce the notation that we shall use throughout this section. Let $\bS_d=(\F_{q^{2l}},+,\star_d)$ be the semifield isotope of $BH(q,\,l,\,d)$ with $\star_d$ as defined in Eqn. \eqref{2.2}. For $x\in\F_{q^{2l}}$,  define $K_d(x)= x\ast_d 1$. Let $*_{d'}$, $\star_{d'}$ and $\bS_{d'}$ and $K_{d'}$ be the corresponding objects for $BH(q,\,l,\,d')$. We recall the following result \cite{Coulter2008Commutative}.
\begin{thm}\label{thm_alpha}\cite[Theorem 2.5]{Coulter2008Commutative}  
Let $\bS_1=(\F_q,\,+,\,\star_1)$ and $\bS_2=(\F_{q},\,+,\,\star_2)$ be isotopic commutative semifields. Then there exists an isotopism  $(M,\,N,\,L)$ between $\bS_1$ and $\bS_2$ such that either
\begin{enumerate}
\item[(i)] $M=N$, or
\item[(ii)] $M(x)\equiv \alpha \star_1 N(x)  \mod(X^q-X)$, where $\alpha \in N_m({\bS_1})$ can not be written as the product of an element of $N(\bS_1)$ and a square of $N_m (\bS_1)$.
\end{enumerate}
\end{thm}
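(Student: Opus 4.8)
The plan is to use commutativity to compress an arbitrary isotopism into a single linear map and then to recognize that map as left multiplication by a middle-nucleus element. So I fix an isotopism $(M,N,L)$ between $\bS_1$ and $\bS_2$, which by definition satisfies $L(x\star_1 y)=M(x)\star_2 N(y)$ for all $x,y\in\F_q$. Applying this identity to the swapped pair $(y,x)$ and invoking the commutativity of $\star_1$, I would first record the symmetry $M(x)\star_2 N(y)=M(y)\star_2 N(x)$. Setting $Q:=M\circ N^{-1}$ and substituting $s=N(x)$, $t=N(y)$ turns this into the self-adjointness relation
\begin{equation*}
Q(s)\star_2 t=Q(t)\star_2 s\qquad\text{for all }s,t\in\F_q,\tag{$\ast$}
\end{equation*}
so that the entire isotopism is now encoded in the single linear permutation $Q$.

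The crux is the following lemma, which I would isolate and prove first: for a commutative semifield $\bS_2=(\F_q,+,\star_2)$ with identity $e_2$, a linear map $Q$ satisfies $(\ast)$ if and only if $Q=L_c$ for some $c\in N_m(\bS_2)$, where $L_c$ denotes the left multiplication $x\mapsto c\star_2 x$. One direction is immediate: putting $t=e_2$ in $(\ast)$ gives $Q(s)=Q(e_2)\star_2 s$, so $Q=L_c$ with $c=Q(e_2)$. For the converse one rewrites $(\ast)$ for $L_c$ as $(c\star_2 u)\star_2 v=(c\star_2 v)\star_2 u$, which by commutativity of $\star_2$ is exactly the defining relation for $c$ to lie in the middle nucleus $N_m(\bS_2)$. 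Applying the lemma to our $Q$ yields $M(x)=c\star_2 N(x)$ with $c\in N_m(\bS_2)$; orienting the isotopism so that the outer product is $\star_1$ (equivalently, reading the relation off the inverse isotopism $\bS_2\to\bS_1$) puts this in the stated shape $M(x)=\alpha\star_1 N(x)$ with $\alpha\in N_m(\bS_1)$, the middle nucleus of the semifield whose multiplication appears on the outer side.

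Next I would settle the dichotomy by exploiting the freedom to post-compose $(M,N,L)$ with autotopisms, which keeps it an isotopism between $\bS_1$ and $\bS_2$ but changes $Q$. The relevant autotopisms are the two nucleus families: for $z$ in the center $N_l(\bS_2)$ the triple $(L_{z},\mathrm{id},L_{z})$ is an autotopism, and for $\delta\in N_m(\bS_2)^{\ast}$ the triple $(L_\delta,L_\delta^{-1},\mathrm{id})$ is an autotopism. Post-composing with these replaces $Q=L_c$ by $L_{z\star_2 c}$ and by $L_{\delta}L_{c}L_{\delta}=L_{c\star_2\delta\star_2\delta}$ respectively, so the allowed moves shift $c$ exactly within its coset modulo the subgroup $N_l(\bS_2)^{\ast}\cdot\big(N_m(\bS_2)^{\ast}\big)^{2}$. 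If $c$ lies in this subgroup, say $c=z\star_2(\delta\star_2\delta)$, the two moves collapse $Q$ to the identity and the resulting isotopism satisfies $M=N$; this is case (i). If $c$ does not lie in this subgroup, no such reduction exists and we retain $M(x)=\alpha\star_1 N(x)$ with $\alpha$ a middle-nucleus element that is not the product of a center element and a middle-nucleus square; this is case (ii).

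The conceptual heart is the self-adjointness lemma, and once $(\ast)$ is available its proof is short. The hard part will be the bookkeeping of the last step: verifying rigorously that the group generated by the nucleus autotopisms moves $c$ through precisely the coset $c\cdot N_l(\bS_2)^{\ast}\big(N_m(\bS_2)^{\ast}\big)^{2}$ and no further, so that the alternative (i)/(ii) is genuinely governed by membership of $c$ in $N_l(\bS_2)^{\ast}\cdot(N_m(\bS_2)^{\ast})^{2}$. This rests on the multiplicativity identities $L_\delta L_c=L_{\delta\star_2 c}$ and $L_z L_c=L_{z\star_2 c}$, valid for $\delta,c\in N_m(\bS_2)$ and $z$ central because $N_m(\bS_2)$ is a commutative field containing the center. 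A secondary, purely notational hurdle is to match the source-side phrasing of condition (ii): one checks that the isotopism transports $N_l$, $N_m$ and their square classes isomorphically between $\bS_1$ and $\bS_2$, so that the excluded form ``an element of $N_l(\bS_1)$ times a square of $N_m(\bS_1)$'' transfers verbatim.
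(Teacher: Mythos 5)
This statement is quoted from \cite{Coulter2008Commutative} and the paper gives no proof of it, so there is nothing to compare against; judged on its own, your argument is correct and is essentially the standard Coulter--Henderson proof. The key steps all check out: commutativity of $\star_1$ gives $M(x)\star_2 N(y)=M(y)\star_2 N(x)$, hence the self-adjointness relation for $Q=M\circ N^{-1}$; evaluating at the identity and feeding $Q=L_c$ back into the relation correctly identifies $c$ as a middle-nucleus element of the \emph{outer} semifield (and your explicit remark that $\alpha$ lives in the nucleus of the semifield on the outer side is exactly how the paper actually invokes the theorem in Section 4, where the relation used is $(N(x)\star_d\alpha)\star_d N(y)=L(x\star_{d'}y)$ with $\alpha\in N_m(\bS_d)$ --- so your ``reorientation'' is the right fix for the convention mismatch); and the two autotopism families $(L_z,\mathrm{id},L_z)$ and $(L_\delta,L_\delta^{-1},\mathrm{id})$ are genuine autotopisms, with $L_zL_cL_{\mathrm{id}}$ and $L_\delta L_cL_\delta$ computing to $L_{z\star_2 c}$ and $L_{c\star_2\delta\star_2\delta}$ because $N_m(\bS_2)$ is a field containing the center. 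One remark: the step you single out as ``the hard part'' --- showing the nucleus autotopisms move $c$ through \emph{precisely} its coset modulo $N_l(\bS_2)^{*}\cdot(N_m(\bS_2)^{*})^{2}$ and no further --- is not needed. The theorem only asserts a disjunction: if $c$ lies in that subgroup you reduce to $M=N$, and if it does not you already have an isotopism of form (ii) with $\alpha=c$; no claim about irreducibility of case (ii) is being made, so only the easy containment direction of your coset computation is required.
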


Assume that the two presemifields $BH(q,\,l,\,d)$ and $BH(q,\,l,\,d')$ are isotopic, so that $\bS_d$ and $\bS_{d'}$ are isotopic semifields. If they are strongly isotopic, then we must have $d=d'$ by Theorem \ref{thm_str}. Now assume that the two presemifields are isotopic but not strongly isotopic. By Theorem \ref{thm_alpha}, there exists linearized permutations $L(x),\,N(x)$ over $\F_{q^{2l}}$ and  a nonsquare $\alpha$ in $N_m(\bS_d)$ such that
$\left(N(x) \star_d \alpha\right) \star_d N(y) = L(x \star_{d'} y)$ for all $x,\,y\in\F_{q^{2l}}$. By a change of variables, we can rewrite it as
\begin{equation}\label{eqn_orig}
(K_d(x) \star_d \alpha)\star_d K_d(y) = L\left( N^{-1}(K_d(x)) \star_{d'} N^{-1}(K_d(y)) \right)
\end{equation}
 Recall that $K_{d'}(x) \star_{d'} K_{d'}(y) = x \ast_{d'} y$,  the right hand side of Eqn. \eqref{eqn_orig} is equal to 
$$L\left( K_{d'}^{-1} \big(N^{-1}(K_d(x)) \big) \ast_{d'} K_{d'}^{-1}\big( N^{-1}(K_d(y)) \big) \right).$$ 
Then with $L':=L^{-1}$ and $N':=K_{d'}^{-1}N^{-1}K_d$, the equation now takes the  form
\[
L'\big( (K_d(x) \star_d \alpha) \star_d K_d(y) \big)  = N'(x) \ast_{d'} N'(y).
\]
By Theorem \ref{thm_Nm}, there exists $a,\,b\in\F_q$ such that $\alpha=\kappa(a,b)$ and $K_d(x)\star_d \alpha = K_d(ax+b\xi x^{q^l})$, where $\xi$ is a constant such that $\xi^{q^{l+d}-1}=\beta^{1-q^l}$. By Lemma \ref{lemma_bne0}, $b\ne 0$. Plugging this into the above equation, we get
\begin{equation} \label{4.1}
  L'(a\cdot x\ast_d y)+L'(b\cdot(\xi x^{q^l})\ast_d y) = N'(x) \ast_{d'} N'(y)
\end{equation}
We denote the left (resp. right) hand side of Eqn. \eqref{4.1} by $C_L$ (resp. $C_R$). By expanding we get  the expression for $C_L$ as follows:
\begin{align*}
L'\left( a\, \tr(x^{q^l}y)\right)+L'\left(b\, \tr(\xi^{q^l} xy)\right)+  L'\left(a\, \tr\big(\beta(x^{q^d}y+xy^{q^d}\big)\,\omega\right)+ L'\left(b\,\tr\big(\beta\xi (x^{q^d}y^{q^l}+ x^{q^l}y^{q^d})\big)\,\omega\right).
\end{align*}
In the last term, we have used the fact that  $\xi^{q^{l+d}-1}=\beta^{1-q^l}$.
By the definition of $\ast_{d'}$,
\begin{equation*}
C_R=\tr\left(N'(x)N'(y)^{q^l}\right)+ \tr\left(\beta(N'(x)^{q^{d'}}N'(y)+N'(x)N'(y)^{q^{d'}})\right)\, \omega.
\end{equation*}
From these expressions we compute that
\begin{align}
C_L+C_L^{q^l}&= 2\, N'(x)^{q^l}N'(y)+2\, N'(x)N'(y)^{q^l},\label{4.2}\\
C_L-C_L^{q^l}&=2\,\tr\left(\beta(N'(x)^{q^{d'}}N'(y)+N'(x)N'(y)^{q^{d'}})\right)\, \omega. \label{4.3}
\end{align}
Since $L'$ and $N'$ are linearized, we have
\[
L'(x)=\sum_{i=0}^{2lh-1}a_ix^{p^i},\quad N'(x)=\sum_{i=0}^{2lh-1}b_ix^{p^i},
\]
for some constants $a_i$'s and $b_i$'s in $\F_{q^{2l}}$. And the subscripts of $a_i$'s and $b_i
$'s are read modulo $2lh$ as in Section 3. By equating the coefficients of chosen monomials on both sides of Eqns. \eqref{4.2}, \eqref{4.3}, we can deduce restrictions on the coefficients of $L'$ and $N'$.
\begin{lemma} \label{lem_gamma}
The integer $l$ is even, and $d$, $d'$ are both odd.
If $b_i \ne 0$ for some $i$, then there exists $\gamma_i \in \F_{q^l}^*$ such that  $\gamma_i^{p^{-i}}$ is a root of the quadratic polynomial $ \xi ^{q^l+1}X^2-2(ab^{-1}) X +1\in\F_q[X]$ and $b_{i+lh}= b_i\xi^{p^i}\gamma_i\ne 0$ .
 \end{lemma}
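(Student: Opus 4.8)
The plan is to treat \eqref{4.2} and \eqref{4.3} as identities in the quotient ring and to read off relations by comparing coefficients of the monomials $x^{p^u}y^{p^v}$, exactly in the spirit of Section~3 but now tracking the two extra groups of terms coming from the $b$-part of $C_L$. Writing $C_L=T_1+T_2+T_3+T_4$ for the four summands displayed after \eqref{4.1}, a direct expansion shows that, classified by the value of $v-u\pmod{2lh}$, the term $T_1$ lives in slice $lh$, the term $T_3$ in slices $dh$ and $(2l-d)h$, the term $T_4$ in slices $(l-d)h$ and $(l+d)h$, while the $\xi$-twisted term $T_2$ lives only in slice $0$. Conjugation by $q^l$ preserves the slice, so the left-hand sides of \eqref{4.2} and \eqref{4.3} are supported on $S=\{0,\,lh,\,dh,\,(2l-d)h,\,(l-d)h,\,(l+d)h\}$. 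The new features compared with Section~3 are precisely the diagonal slice $0$ (from $T_2$) and the two slices $(l\pm d)h$ (from $T_4$): it is the nonzero slice-$0$ contribution that will replace the conclusion $b_{i+lh}=0$ of Lemma~\ref{lem_3.2} by $b_{i+lh}\neq0$.

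The first step is a support analysis: comparing the coefficients in slices \emph{outside} $S$, where the left-hand sides vanish, against the bilinear right-hand sides, I would show that $b_i\neq0$ forces $b_{i-d'h}=b_{i-d'h+lh}=0$. This is exactly what is needed to annihilate the diagonal and the slice-$lh$ coefficients of the right-hand side of \eqref{4.3}, which couple $b_i$ to the $d'$-shifted coefficients through the $\ast_{d'}$-product. This step parallels Lemmas~\ref{lem_3.2}--\ref{lem_3.4}, but is the technically heaviest part and the main obstacle, because the enlarged slice set $S$ (with its new entries $0$ and $(l\pm d)h$) makes the bookkeeping of forced vanishing more delicate, and because one must carry the still-undetermined coefficients $a_j$ of $L'$ along until they can be eliminated.

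With those cross terms gone I would compare the diagonal monomial $x^{p^i}y^{p^i}$. On the left only $T_2$ and $T_2^{q^l}$ contribute, and a short computation gives that the coefficient of $x^{p^i}y^{p^i}$ in $C_L$ equals $\xi^{p^{i+lh}}P$, where $P:=a_ib^{p^i}+a_{i+lh}b^{p^{i+lh}}$; the analogous coefficient at index $i+lh$ equals $\xi^{p^i}P$ with the \emph{same} $P$. Combining the diagonal relations from \eqref{4.2} and \eqref{4.3} (the diagonal right-hand side of \eqref{4.3} vanishing by the support step) yields $\xi^{p^{i+lh}}P=2b_ib_{i+lh}^{q^l}$ and $\xi^{p^i}P=2b_{i+lh}b_i^{q^l}$. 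Taking their ratio eliminates $P$ and gives $\big(b_{i+lh}/(b_i\xi^{p^i})\big)^{q^l-1}=1$, so that $\gamma_i:=b_{i+lh}/(b_i\xi^{p^i})\in\F_{q^l}^\ast$ and $b_{i+lh}=b_i\xi^{p^i}\gamma_i$; here $b_{i+lh}\neq0$, since otherwise $P=0$ and the slice-$lh$ relation below would give $b_i^{q^l+1}=0$, a contradiction.

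Finally I would bring in the scalar $a$ through the slice-$lh$ ($T_1$) coefficients. Comparing $x^{p^i}y^{p^{i+lh}}$ in \eqref{4.2} and \eqref{4.3} and using $a,b\in\F_q$ (so $a^{p^{i+lh}}=a^{p^i}$ and $b^{p^{i+lh}}=b^{p^i}$) collapses these to $a^{p^i}(a_i+a_{i+lh})=b_i^{q^l+1}+b_{i+lh}^{q^l+1}$, while the diagonal relation reads $b^{p^i}(a_i+a_{i+lh})=2b_i^{q^l+1}\gamma_i$. Cross-multiplying to eliminate $a_i+a_{i+lh}$, substituting $b_{i+lh}^{q^l+1}=b_i^{q^l+1}(\xi^{q^l+1})^{p^i}\gamma_i^2$, and dividing by $b^{p^i}b_i^{q^l+1}$ produces $(\xi^{q^l+1})^{p^i}\gamma_i^2-2(ab^{-1})^{p^i}\gamma_i+1=0$, which upon applying the Frobenius $p^{-i}$ says precisely that $\gamma_i^{p^{-i}}$ is a root of $\xi^{q^l+1}X^2-2(ab^{-1})X+1$. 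Its discriminant is $4b^{-2}(a^2-b^2\xi^{q^l+1})$, a nonsquare in $\F_q$ by Lemma~\ref{lemma_bne0} because $\alpha$ is a nonsquare; hence the quadratic is irreducible over $\F_q$ and $\gamma_i^{p^{-i}}\in\F_{q^2}\setminus\F_q$, so $\gamma_i\in\F_{q^2}\setminus\F_q$. Since also $\gamma_i\in\F_{q^l}^\ast$, we must have $\F_{q^2}\cap\F_{q^l}\supsetneq\F_q$, forcing $l$ even; then $d$ and $d'$ are odd because $l+d$ and $l+d'$ are odd, which completes the proof.
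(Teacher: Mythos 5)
Your slice bookkeeping, the diagonal/slice-$lh$ computations, and the endgame (the quadratic, its discriminant being a nonsquare by Lemma \ref{lemma_bne0}, hence $\gamma_i\in\F_{q^2}\setminus\F_q$ and $l$ even) are all sound and agree with the paper. The problem is your ``first step,'' which you yourself flag as the main obstacle and do not carry out: the claim that $b_i\neq 0$ forces $b_{i-d'h}=b_{i-d'h+lh}=0$ by comparing coefficients in slices outside $S$. This step cannot be established by that method, because the only monomials of Eqn.~\eqref{4.2} that couple $b_{i-d'h}$ to $b_i$ are $x^{p^{i-d'h}}y^{p^i}$ and its $lh$-shifts, which lie in slice $(l-d')h \pmod{lh}$; this slice is \emph{outside} $\{0,dh,(l-d)h\}$ only when $d'\notin\{d,\,l-d\}$. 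In the cases $d'=d$ and $d'=l-d$ --- and $d'=l-d$ is precisely the case Theorem \ref{thm_iso} says actually occurs --- the relevant monomials sit inside $S$, the left-hand sides do not vanish there, and you get no information. Indeed, in the paper the vanishing of $b_{i\pm dh}$ (equivalently of $b_{i-d'h}$ when $d'\in\{d,l-d\}$) is only proved two lemmas \emph{after} Lemma \ref{lem_gamma}, using Lemma \ref{lem_gamma} as input; so your proposed order of deductions is circular. Since your derivation of the diagonal and slice-$lh$ relations leans on the right-hand side of Eqn.~\eqref{4.3} vanishing in those slices, the gap propagates to everything that follows.

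The fix is also the reason the paper's proof is shorter: Eqn.~\eqref{4.3} is not needed at all for this lemma. The right-hand side of Eqn.~\eqref{4.2} is $2N'(x)^{q^l}N'(y)+2N'(x)N'(y)^{q^l}$, which involves no $d'$-shifted coefficients, so comparing the coefficients of $x^{p^i}y^{p^i}$, $x^{p^iq^l}y^{p^i}$ and $x^{p^iq^l}y^{p^iq^l}$ in \eqref{4.2} alone yields three relations in $b_i$, $b_{i+lh}$ and the single quantity $\tr(a_i+a_{i+lh})$ (there is no need to know that $a_i+a_{i+lh}$ or your $P$ lies in $\F_{q^l}$; one simply carries the trace through and observes it is nonzero because otherwise $b_i=0$). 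The ratio of the first and third relations gives $\gamma_i\in\F_{q^l}^*$ with $b_{i+lh}=b_i\xi^{p^i}\gamma_i\neq0$, and eliminating $\tr(a_i+a_{i+lh})$ between them and the second gives the quadratic. Your computation from that point on is essentially the paper's.
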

\begin{proof}
Since $N'$ is a permutation, at least one of its coefficient is nonzero. Assume that $b_i\ne 0$, $0\le i\le 2lh-1$. We compare the coefficients of $x^{p^i}y^{p^i}$, $x^{p^{i}q^l}y^{p^i}$ and $x^{p^{i}q^l}y^{p^{i}q^l}$ respectively, on both sides of  Eqn. \eqref{4.2}, and get
\begin{align}
   &\tr(a_{i}+a_{i+lh})\ (b\xi^{q^l})^{p^i}  =  4b_{i+lh}^{q^l}b_{i}\label{eqn_i1}\\
   &\tr(a_{i}+a_{i+lh})\ a^{p^i} = 2 b_{i}^{q^l+1}+2b_{i+lh}^{q^l+1}\label{eqn_i2} \\
   &\tr(a_{i}+a_{i+lh})\ (b\xi)^{p^i}  = 4b_{i}^{q^l}b_{i+lh}\label{eqn_i3}.
\end{align}
The right hand sides can not all be zero, so $\tr(a_{i}+a_{i+lh})\ne 0$. Since $b\ne 0$ by Lemma \ref{lemma_bne0}, Eqns. \eqref{eqn_i1} \eqref{eqn_i3} show that $b_{i+lh}\ne 0$ and $b_{i+lh}=b_i\xi^{p^i}\gamma_i$ for some $\gamma_i\in\F_{q^l}^\ast$. This proves the second claim.

From Eqns. \eqref{eqn_i1}-\eqref{eqn_i3} we deduce  that $b_i^{-(q^l+1)}\,\tr(a_{i}+a_{i+lh})$ is equal to both $4b^{-p^i}\gamma_i$ and $2a^{-p^i}(1 +\xi^{p^i(q^l+1)}\gamma_i^2)$. Therefore, $4b^{-p^i}\gamma_i=2a^{-p^i}(1 +\xi^{p^i(q^l+1)}\gamma_i^2)$, which simplifies to  $ \xi^{q^l+1}(\gamma_i^{p^{-i}})^2-2ab^{-1}\gamma_i^{p^{-i}} +1=0$.  The equation $ \xi^{q^l+1}X^2-2ab^{-1} X +1=0$ has coefficients in $\F_{q}$, and its determinant $4((ab^{-1})^2-\xi^{q^l+1})$ is a nonsquare in $\F_{q}$ by Lemma \ref{lemma_bne0}. Since $\gamma_i^{p^{-i}}$ is a solution, we have $\gamma_i\in\F_{q^2}\setminus\F_{q}$. Since $\gamma_i\in\F_{q^l}^*$,  $l$ must be even. Recall that $l+d$ and $l+d'$ are odd, so both $d$ and $d'$ are odd.
\end{proof}

Observe that if $x^{p^i}y^{p^j}$ ($0\le i,\,j\le 2lh-1$) has a nonzero coefficient in $C_L$, then $i-j\pmod{2lh}$ must lie in the set $\{0,\,lh,\,dh,\,2lh-dh,\,dh+lh,\,lh-dh\}$, or equivalently, $i-j\pmod{lh}\in\{0,\,dh,\,lh-dh\}$. The same is true for $C_L^{q^l}$. Therefore, if $i-j\pmod{lh}\not\in\{0,\,dh,\,lh-dh\}$, then the coefficients of $x^{p^i}y^{p^j}$ on the right hand sides of Eqns. \eqref{4.2}, \eqref{4.3} are zero.  That is
\begin{align}
    &b_{i+lh}^{q^l}b_{j}+b_{i}b_{j+lh}^{q^l}=0, \label{eqn_bi1}\\
    &\beta(b_{i-d'h}^{q^{d'}}b_{j}+b_{i}b_{j-d'h}^{q^{d'}})+\beta^{q^l}(b_{i-d'h+lh}^{q^{d'}}b_{j+lh}+b_{i+lh}b_{j-d'h+lh}^{q^{d'}})^{q^l}=0.\label{eqn_bi2}
\end{align}
\begin{lemma}\label{lem_cob}
Let $(i,\,j)$ be a pair such that
\begin{equation}\label{cond_ij}
0\le i,\,j\le 2lh-1,\quad i-j\pmod{lh}\not\in\{0,\,dh,\,lh-dh\}.
\end{equation}
If $b_i\ne 0$, then $b_{j}=0$.
\end{lemma}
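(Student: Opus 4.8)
The plan is to argue by contradiction, reducing everything to the quadratic of Lemma \ref{lem_gamma}. I would assume $b_i\ne 0$ and, contrary to the claim, $b_j\ne 0$ for a pair $(i,j)$ satisfying \eqref{cond_ij}, and then force the quadratic $\xi^{q^l+1}X^2-2ab^{-1}X+1\in\F_q[X]$ to have a repeated root, which contradicts Lemma \ref{lemma_bne0} (its discriminant $4((ab^{-1})^2-\xi^{q^l+1})$ is a nonsquare, in particular nonzero). The point of this route is that it needs only Eqn.~\eqref{eqn_bi1} together with Lemmas \ref{lem_gamma} and \ref{lemma_bne0}; I do not expect to need \eqref{eqn_bi2} at all.

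First I would record what Lemma \ref{lem_gamma} gives under the assumption $b_i\ne0$, $b_j\ne0$: both $b_{i+lh}$ and $b_{j+lh}$ are nonzero, with $b_{i+lh}=b_i\xi^{p^i}\gamma_i$ and $b_{j+lh}=b_j\xi^{p^j}\gamma_j$ for some $\gamma_i,\gamma_j\in\F_{q^l}^*$, and moreover $\gamma_j^{p^{-j}}$ is a root of the quadratic above. The key idea is then to apply \eqref{eqn_bi1} \emph{twice}: to the pair $(i,j)$ and also to the pair $(i,\,j+lh)$. The crucial bookkeeping observation is that $i-(j+lh)\equiv i-j\pmod{lh}$, so $(i,\,j+lh)$ again satisfies hypothesis \eqref{cond_ij} and \eqref{eqn_bi1} is legitimately available for it. Using $b_{j+2lh}=b_j$, the two instances read $b_{i+lh}^{q^l}b_j+b_ib_{j+lh}^{q^l}=0$ and $b_{i+lh}^{q^l}b_{j+lh}+b_ib_j^{q^l}=0$.

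Next I would substitute the relations from the first step (together with $\gamma_i^{q^l}=\gamma_i$) and divide through by $b_ib_j\ne0$, turning the two instances into $b_i^{q^l-1}\xi^{q^lp^i}\gamma_i+b_j^{q^l-1}\xi^{q^lp^j}\gamma_j=0$ and $b_i^{q^l-1}\xi^{q^lp^i+p^j}\gamma_i\gamma_j+b_j^{q^l-1}=0$. Writing $A:=b_i^{q^l-1}\xi^{q^lp^i}\gamma_i\ne0$, the first relation gives $b_j^{q^l-1}=-A\,\xi^{-q^lp^j}\gamma_j^{-1}$ and the second gives $b_j^{q^l-1}=-A\,\xi^{p^j}\gamma_j$; equating and cancelling $A$ yields $\xi^{p^j}\gamma_j=\xi^{-q^lp^j}\gamma_j^{-1}$, that is, $(\gamma_j^{p^{-j}})^2=\xi^{-(q^l+1)}$. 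But $\xi^{-(q^l+1)}$ is exactly the product of the two roots of $\xi^{q^l+1}X^2-2ab^{-1}X+1$, so the root $\gamma_j^{p^{-j}}$ equals the other root and the quadratic has a double root; its discriminant then vanishes, contradicting Lemma \ref{lemma_bne0}. Hence $b_j=0$.

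I expect the main obstacle to be purely organizational rather than conceptual: getting the index arithmetic modulo $2lh$ right (verifying that $(i,\,j+lh)$ still falls under \eqref{cond_ij}, and that $j+2lh\equiv j$), and carrying out the substitution of the $\gamma$-relations so that the unknown ratios $b_i^{q^l-1}$ and $b_j^{q^l-1}$ cancel cleanly. Once the identity $(\gamma_j^{p^{-j}})^2=\xi^{-(q^l+1)}$ is in hand, the contradiction with the nonsquare discriminant is immediate.
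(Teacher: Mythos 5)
Your proof is correct and follows essentially the same route as the paper: both apply Eqn.~\eqref{eqn_bi1} to the given pair and to a second pair obtained by shifting one index by $lh$ (you shift $j$, the paper shifts $i$), substitute the relations $b_{k+lh}=b_k\xi^{p^k}\gamma_k$ from Lemma~\ref{lem_gamma}, and derive that a root of the quadratic $\xi^{q^l+1}X^2-2ab^{-1}X+1$ squares to $\xi^{-(q^l+1)}$. Your closing contradiction (double root, hence zero discriminant, against Lemma~\ref{lemma_bne0}) is an equivalent reformulation of the paper's (which forces the root into $\F_q$), so the argument is sound as written.
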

\begin{proof}
By the argument preceding this lemma, Eqn. \eqref{eqn_bi1} holds for this pair $(i,\,j)$. Write $i'=i+lh\pmod{2lh}$. Since the pair $(i',\,j)$ also satisfies the condition \eqref{cond_ij}, it holds that
\begin{equation}\label{eqn_bip}
  b_{i}^{q^l}b_{j}+b_{i+lh}b_{j+lh}^{q^l}=0.
\end{equation}

Assume that $b_j\ne 0$. By Lemma \ref{lem_gamma}, there exist $\gamma_i,\,\gamma_j$ in $\F_{q^2}\setminus\F_q$ such that $b_{i+lh}=b_i \xi^{p^i}\gamma_i$ and $b_{j+lh}=b_j \xi^{p^j}\gamma_j$. We plug them into Eqns. \eqref{eqn_bi1} and \eqref{eqn_bip} respectively and get
\begin{equation*}
 b_{i}^{q^l}b_{j}\xi^{p^iq^l}\gamma_i=-b_{i}b_{j}^{q^l}\xi^{p^jq^l}\gamma_j,\quad b_{i}^{q^l}b_{j}=-b_{i}b_{j}^{q^l} \xi^{p^i+p^jq^l}\gamma_i \gamma_j .
\end{equation*}
It follows that $(b_ib_j^{-1})^{q^l-1}$ is equal to both $-\xi^{p^jq^l-p^iq^l}\gamma_j\gamma_i^{-1}$ and $-\xi^{p^i+p^jq^l}\gamma_i\gamma_j$.
By equating the two quantities and simplifying, we deduce that $\xi^{q^l+1}\gamma_i^{2p^{-i}}-1=0$. Meanwhile, $ \xi ^{q^l+1}\gamma_i^{2p^{-i}}-2ab^{-1} \gamma_i^{p^{-i}} +1=0$ by Lemma \ref{lem_gamma}. It follows that $ab^{-1} \gamma_i^{p^{-i}}=1$, so $a\ne 0$ and $\gamma_i^{p^{-i}}\in\F_q$. This contradicts the fact that $\gamma_i$ is in $\F_{q^2}\setminus\F_q$. Therefore, $b_j$ must be zero.
\end{proof}

Let $\Lambda$ be the set $\{0\le i\le 2lh-1:\, b_i\ne 0\}$. By Lemma \ref{lem_gamma}, $i\in\Lambda$ if and only $i+lh\pmod{2lh}$ is in $\Lambda$. For any two distinct elements of $\Lambda$, their difference modulo $lh$ is in $\{0,\,dh,\,lh-dh\}$ by Lemma \ref{lem_cob}. We claim that {\it the set $\{i\pmod{lh}:\,i\in\Lambda\}$ has size at most two}. Otherwise, it contains a three-term arithmetic progression modulo $2lh$ with common difference $dh$. The difference between the two nonadjacent terms is $2dh\pmod{lh}$ and it should be in the set $\{0,\,dh,\,lh-dh\}$. This is the case only if $lh$ divides $2dh$ or $3dh$. Since $l$ is even and $d$ is odd by Lemma \ref{lem_gamma}, we must have $l=2d$. However, it follows from $\gcd(l,d)=1$ that $l=2$ and $d=1$, contradicting the assumption that $l>2$. This proves the claim.

Let $i$ be the minimal element of $\Lambda$. Then $\Lambda\subseteq\{i,\,i+dh,\,i+lh,\,i+dh+lh\}$, and
\begin{equation}\label{4.6}
N'(x)= b_i x^{p^i}+b_{i+lh} x^{p^{i+lh}} +b_{i+dh} x^{p^{i+dh}}+b_{i+dh+lh} x^{p^{i+dh+lh}}.
\end{equation}
Moreover,  there exists $\gamma_j \in \F_{q^2}\setminus \F_q$ such that $b_{j+lh}=b_{j}\xi^{p^j}\gamma_j$ and $\gamma_j^{q+1}=\xi^{-p^j(q^l+1)}$ for $j \in \Lambda$ by Lemma \ref{lem_gamma}. We start with a technical lemma.

\begin{lemma}\label{lem_k}
 There does not exist an odd integer $k$ such that
\begin{equation}\label{eqn_k}
\beta b_i^{q^k+1}+\beta^{q^l}b_{i+lh}^{(q^k+1)q^l}=0,\quad \beta b_{i}^{q^k}b_{i+lh}+\beta^{q^l} b_{i+lh}^{q^{k+l}}b_{i}^{q^l}=0.
\end{equation}
\end{lemma}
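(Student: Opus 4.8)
The plan is to derive a contradiction directly from the two relations in \eqref{eqn_k} by \emph{dividing} one by the other, which eliminates the exponent $q^k$ entirely; a pleasant consequence is that the parity of $k$ (indeed its precise value) turns out to play no role. Throughout I would write $u:=b_i$ and $v:=b_{i+lh}$, both nonzero: $b_i\ne0$ since $i\in\Lambda$, and $b_{i+lh}=b_i\xi^{p^i}\gamma_i\ne0$ by Lemma \ref{lem_gamma}. Since also $\beta\ne0$, every monomial occurring in \eqref{eqn_k} is nonzero, so each of the two relations has the form $A+B=0$ with $A,B\ne0$, and I may legitimately divide one equation by the other.

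First I would rewrite the two relations as $\beta u^{q^k+1}=-\beta^{q^l}v^{(q^k+1)q^l}$ and $\beta u^{q^k}v=-\beta^{q^l}v^{q^{k+l}}u^{q^l}$. Dividing the first by the second and using $(q^k+1)q^l=q^{k+l}+q^l$, the factors $\beta,\beta^{q^l}$ together with the entire $q^k$-dependence cancel, leaving $u/v=v^{q^l}/u^{q^l}$, that is,
\[
\left(\frac{b_i}{b_{i+lh}}\right)^{q^l+1}=1.
\]
This is the only place the two equations are used together, and it is the step I expect to be the crux: the key observation is that after the substitution $(q^k+1)q^l=q^{k+l}+q^l$ the unwieldy $q^k$-exponents occur identically in numerator and denominator, so they disappear on division, regardless of $k$.

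Next I would feed in the structure of $b_{i+lh}$. Substituting $b_{i+lh}=b_i\xi^{p^i}\gamma_i$ gives $(\xi^{p^i}\gamma_i)^{q^l+1}=1$, i.e. $\xi^{p^i(q^l+1)}\gamma_i^{q^l+1}=1$. I then invoke the second defining property of $\gamma_i$ from Lemma \ref{lem_gamma}, namely $\gamma_i^{q+1}=\xi^{-p^i(q^l+1)}$, to eliminate $\xi$; this collapses the identity to $\gamma_i^{q^l+1}=\gamma_i^{q+1}$, hence $\gamma_i^{q^l-q}=1$, i.e. $\gamma_i^{q^l}=\gamma_i^{q}$.

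Finally, since $\gamma_i\in\F_{q^l}^*$ we have $\gamma_i^{q^l}=\gamma_i$, so the last identity forces $\gamma_i^{q}=\gamma_i$, i.e. $\gamma_i\in\F_q$. This contradicts $\gamma_i\in\F_{q^2}\setminus\F_q$ guaranteed by Lemma \ref{lem_gamma}, and the proof is complete. I would note that the argument never used the oddness of $k$, so in fact \emph{no} integer $k$, odd or even, satisfies \eqref{eqn_k}; the stated odd case is merely what is needed in the sequel.
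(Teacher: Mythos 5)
Your proof is correct and is essentially the paper's argument run in the opposite order: the paper substitutes $b_{i+lh}=b_i\xi^{p^i}\gamma_i$ into each relation first (using the oddness of $k$ to reduce $\gamma_i^{q^{k+l}}$ to $\gamma_i^{q}$) and then takes the quotient, arriving at the identity $\gamma_i^{2}=\xi^{-p^i(q^l+1)}$ — which, since $\gamma_i\in\F_{q^l}$, is exactly your $(b_i/b_{i+lh})^{q^l+1}=1$ — and the same contradiction $\gamma_i\in\F_q$. Your reordering (divide first, substitute second) is a minor but genuine tidying: it cancels the $q^k$-dependence outright, so the hypothesis that $k$ is odd is never used.
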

\begin{proof}
We plug $b_{i+lh}=b_i\xi^{p^i}\gamma_i$ into the above equations and simplify them to get
\[
(\beta b_i^{q^{k}+1})^{q^l-1}\xi^{p^i(q^{k+l}+q^l)}\gamma_i^{q+1}=-1,\quad
(\beta b_i^{q^{k}+1})^{q^l-1}\xi^{p^i(q^{k+l}-1)}\gamma_i^{q-1}=-1.
\]
Taking quotient, we get $\gamma_i^{2}=\xi^{-p^i(q^l-1)}$. Together with $\gamma_i^{q+1}=\xi^{-p^i(q^l+1)}$, we deduce that $\gamma_i^{q-1}=1$, contradicting the fact that $\gamma_i \notin \F_q$. This completes the proof.
\end{proof}

\begin{lemma}\label{lem_dp}
We have $d'=d$ or $d'=l-d$.
\end{lemma}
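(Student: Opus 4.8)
The plan is to argue by contradiction. Suppose $d'\ne d$ and $d'\ne l-d$; I will show that this forces precisely the two identities forbidden by Lemma \ref{lem_k}, taken with the odd exponent $k=d'$. The guiding idea is that the ``$d'$-shifted'' monomials, which are the natural carriers of information about $d'$ on the right-hand side of Eqn. \eqref{4.3}, cannot appear at all on the left-hand side once $d'\notin\{d,\,l-d\}$; hence their coefficients in $C_R$ must vanish, and reading them off will reproduce exactly the relations \eqref{eqn_k}.

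Concretely, recall from \eqref{4.6} that $\Lambda\subseteq\{i,\,i+dh,\,i+lh,\,i+dh+lh\}$, so modulo $lh$ the support of $N'$ reduces to the two residues $i$ and $i+dh$. By the observation preceding Lemma \ref{lem_cob}, a monomial $x^{p^u}y^{p^v}$ can have a nonzero coefficient in $C_L$ (and hence in $C_L^{q^l}$) only if $u-v\pmod{lh}\in\{0,\,dh,\,lh-dh\}$. I would therefore focus on the two monomials $x^{p^{i+d'h}}y^{p^i}$ and $x^{p^{i+d'h}}y^{p^{i+lh}}$, both of which have exponent difference congruent to $d'h\pmod{lh}$. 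Under the standing assumption this difference lies outside $\{0,\,dh,\,lh-dh\}$ (indeed $0<d'<l$ already rules out $d'h\equiv0$, while $d'\ne d$ and $d'\ne l-d$ rule out the other two), so both monomials are absent from the left-hand side of Eqn. \eqref{4.3}, and their coefficients in $C_R$ must be zero.

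The remaining work is to compute these two coefficients. Writing $C_R=2\,\tr\!\big(\beta(N'(x)^{q^{d'}}N'(y)+N'(x)N'(y)^{q^{d'}})\big)\,\omega$ as the sum of $\beta(\cdots)$ and its $q^l$-conjugate, I would track which of the four resulting product terms contributes to each chosen monomial. The key simplification is that the two terms arising from $N'(x)N'(y)^{q^{d'}}$ and its conjugate require the index $i+d'h$, respectively $i+d'h+lh$, to lie in $\Lambda$; but neither of these is even a member of the containing set $\{i,\,i+dh,\,i+lh,\,i+dh+lh\}$ once $d'\ne d$ and $0<d'<l$, so $b_{i+d'h}=b_{i+d'h+lh}=0$ and those contributions drop out. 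What survives is built solely from $b_i$ and $b_{i+lh}$, and setting the coefficients of $x^{p^{i+d'h}}y^{p^i}$ and of $x^{p^{i+d'h}}y^{p^{i+lh}}$ to zero yields respectively
\[
\beta b_i^{q^{d'}+1}+\beta^{q^l}b_{i+lh}^{(q^{d'}+1)q^l}=0,\qquad
\beta b_i^{q^{d'}}b_{i+lh}+\beta^{q^l}b_{i+lh}^{q^{d'+l}}b_i^{q^l}=0,
\]
which are precisely the relations \eqref{eqn_k} with $k=d'$. Since $d'$ is odd by Lemma \ref{lem_gamma}, this contradicts Lemma \ref{lem_k}, and therefore $d'=d$ or $d'=l-d$.

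I expect the main obstacle to be the bookkeeping in this last step: correctly determining, for each of the two monomials, which of the four bilinear product terms (and their $q^l$-conjugates) survive after the trace is expanded, and verifying that every spurious index is genuinely forced out of $\Lambda$. Once this is organized the reduction to Lemma \ref{lem_k} is immediate; the only external inputs are the support description \eqref{4.6}, the parity of $d'$ from Lemma \ref{lem_gamma}, and the monomial-support observation preceding Lemma \ref{lem_cob}.
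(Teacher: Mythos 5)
Your proposal is correct and follows essentially the same route as the paper: under the assumption $d'\notin\{d,\,l-d\}$, the indices $i+d'h$ and $i+d'h+lh$ fall outside the support of $N'$, the pairs $(i+d'h,\,i)$ and $(i+d'h,\,i+lh)$ satisfy condition \eqref{cond_ij}, and the vanishing of the corresponding coefficients reduces Eqn.~\eqref{eqn_bi2} exactly to the system \eqref{eqn_k} with $k=d'$, contradicting Lemma~\ref{lem_k}. Your explicit remark that $d'$ is odd (so Lemma~\ref{lem_k} applies) is a detail the paper leaves implicit.
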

\begin{proof}
Suppose to the contrary that $d'\ne d,\,l-d$. Then $i+d'h\not\in\{i,\,i+dh,\,i+lh,\,i+dh+lh\}$, and so $b_{i+d'h}=b_{i+d'h+lh}=0$.
The pairs $(i+d'h,\,i)$, $(i+d'h,\,i+lh)$, with each entry taken modulo $2lh$, satisfy the condition \eqref{cond_ij}, and Eqn. \eqref{eqn_bi2} now takes the form as in Eqn. \eqref{eqn_k} with $k=d'$.  This is impossible by Lemma \ref{lem_k}.
\end{proof}

\begin{lemma}
We have $b_{i+dh}=b_{i+dh+lh}=0$, so $N'(x)= b_i x^{p^i}+b_{i+lh} x^{p^{i+lh}}$.
\end{lemma}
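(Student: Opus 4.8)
The plan is to argue by contradiction: suppose $b_{i+dh}\ne 0$. Since $i\in\Lambda$ we have $b_i\ne0$, so Lemma \ref{lem_gamma} gives $b_{i+lh}=b_i\xi^{p^i}\gamma_i\ne0$ and $b_{i+dh+lh}=b_{i+dh}\xi^{p^{i+dh}}\gamma_{i+dh}\ne0$, where $\gamma_i,\gamma_{i+dh}\in\F_{q^2}\setminus\F_q$ and (as recorded after \eqref{4.6}) $\gamma_{i+dh}^{q+1}=\xi^{-p^{i+dh}(q^l+1)}$; moreover $\gamma_{i+dh}^{q^l}=\gamma_{i+dh}$ because $l$ is even. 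Thus all four coefficients in \eqref{4.6} are nonzero, and the aim is to extract from \eqref{4.3} an identity forcing $\gamma_{i+dh}\in\F_q$, which contradicts $\gamma_{i+dh}\notin\F_q$.

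First I would record the arithmetic fact that, since $l$ is even, $l>2$ and $\gcd(l,d)=1$ (so $d$ is odd), one has $2dh\pmod{lh}\notin\{0,\,dh,\,lh-dh\}$: the three congruences $2dh\equiv0$, $2dh\equiv dh$ and $3dh\equiv0\pmod{lh}$ would force $l\mid2$, $l\mid d$ and $l\mid3$ respectively, all impossible here. This guarantees that monomials whose $x$- and $y$-exponents differ by $\pm2dh\pmod{lh}$ satisfy condition \eqref{cond_ij}, so that their coefficient on the right-hand side of \eqref{4.3} vanishes. By Lemma \ref{lem_dp} it then suffices to treat the two cases $d'=d$ and $d'=l-d$.

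In the case $d'=d$ I would read off the vanishing of the coefficients of $x^{p^{i+2dh}}y^{p^i}$ and of $x^{p^{i+2dh+lh}}y^{p^i}$ on the right-hand side of \eqref{4.3}. Since only indices in $\Lambda\subseteq\{i,\,i+dh,\,i+lh,\,i+dh+lh\}$ contribute (the other shifts in \eqref{eqn_bi2} falling outside $\Lambda$), these collapse to
\begin{align*}
\beta\, b_i b_{i+dh}^{q^d}+\beta^{q^l} b_{i+lh}^{q^l} b_{i+dh+lh}^{q^{l+d}}&=0,\\
\beta\, b_i b_{i+dh+lh}^{q^d}+\beta^{q^l} b_{i+lh}^{q^l} b_{i+dh}^{q^{l+d}}&=0.
\end{align*}
Dividing one by the other eliminates $b_i$ and $b_{i+lh}$ and yields $(b_{i+dh}/b_{i+dh+lh})^{q^d(q^l+1)}=1$; as $\gcd(q^d,\,q^{2l}-1)=1$ this gives $(b_{i+dh}/b_{i+dh+lh})^{q^l+1}=1$. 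In the case $d'=l-d$ the same scheme applied to $x^{p^{i-dh}}y^{p^{i+dh}}$ and $x^{p^{i-dh}}y^{p^{i+dh+lh}}$ produces
\begin{align*}
\beta\, b_{i+lh}^{q^{l-d}} b_{i+dh}+\beta^{q^l} b_i^{q^{2l-d}} b_{i+dh+lh}^{q^l}&=0,\\
\beta\, b_{i+lh}^{q^{l-d}} b_{i+dh+lh}+\beta^{q^l} b_i^{q^{2l-d}} b_{i+dh}^{q^l}&=0,
\end{align*}
and dividing again gives $(b_{i+dh}/b_{i+dh+lh})^{q^l+1}=1$.

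Finally I would substitute $b_{i+dh+lh}=b_{i+dh}\xi^{p^{i+dh}}\gamma_{i+dh}$ into the common identity $(b_{i+dh}/b_{i+dh+lh})^{q^l+1}=1$. Using $\gamma_{i+dh}^{q^l}=\gamma_{i+dh}$ this reads $\xi^{-p^{i+dh}(q^l+1)}\gamma_{i+dh}^{-2}=1$, i.e. $\gamma_{i+dh}^{2}=\xi^{-p^{i+dh}(q^l+1)}=\gamma_{i+dh}^{q+1}$, whence $\gamma_{i+dh}^{q-1}=1$ and $\gamma_{i+dh}\in\F_q$, the desired contradiction. Therefore $b_{i+dh}=0$, and $b_{i+dh+lh}=0$ by Lemma \ref{lem_gamma}, so $N'(x)=b_i x^{p^i}+b_{i+lh}x^{p^{i+lh}}$. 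I expect the main obstacle to be the purely combinatorial bookkeeping: for each chosen monomial one must verify exactly which of the eight index-shifts occurring in \eqref{eqn_bi2} land in $\Lambda$ (the rest vanishing by Lemma \ref{lem_cob}), so that the equations collapse to the displayed two-term relations. The conceptual shortcut that avoids any messy elimination is to choose the monomials in pairs symmetric under interchanging $b_{i+dh}$ and $b_{i+dh+lh}$ and then divide, reducing both cases to the single clean identity $(b_{i+dh}/b_{i+dh+lh})^{q^l+1}=1$.
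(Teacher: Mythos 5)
Your proof is correct and follows essentially the same route as the paper: both arguments extract from \eqref{eqn_bi2} two two-term relations by comparing coefficients of monomials whose exponents differ by $\pm 2dh\pmod{lh}$, and both reach the contradiction that some $\gamma_j$ lies in $\F_q$. Your choice of monomial pairs (shifted by $lh$ in one slot) lets you divide everything down to the single identity $(b_{i+dh}/b_{i+dh+lh})^{q^l+1}=1$ in both cases before invoking $\gamma_{i+dh}^{q+1}=\xi^{-p^{i+dh}(q^l+1)}$, which is a marginally cleaner organization of the same computation.
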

\begin{proof}
By Lemma \ref{lem_gamma}, we just need to show that $b_{i+dh}=0$. Suppose to the contrary that $b_{i+dh}\ne 0$. We have $i+2dh\not\in\{i,\,i+lh,\,i+dh,\,i+dh+lh\}$, so $b_{i+2dh}=b_{i+2dh+lh}=0$. The proof is very similar to that of Lemma \ref{lem_dp}, and we need to split into two cases.

{\bf Case 1: $d'=d$}.  The pairs $(i+2dh,\,i)$, $(i+2dh,\,i+lh)$ with each entry taken modulo $2lh$, satisfy the condition \eqref{cond_ij} and Eqn. \eqref{eqn_bi2} read as
$\beta b_{i+dh}^{q^d}b_{i}+\beta^{q^l} b_{i+dh+lh}^{q^{d+l}}b_{i+lh}^{q^l} =0$,
$\beta b_{i+dh}^{q^d}b_{i+lh}+\beta^{q^l} b_{i+dh+lh}^{q^{d+l}}b_{i}^{q^l}=0$.
As in the proof of Lemma \ref{lem_dp}, we deduce that
\begin{equation*}
(\beta b_{i+dh}^{q^d}b_{i})^{q^l-1} \gamma_i \gamma_{i+dh}^q \xi^{p^{i}(q^{l+2d}+q^l)}=-1,\quad
(\beta b_{i+dh}^{q^d}b_{i})^{q^l-1} \gamma_i^{-1}\gamma_{i+dh}^q \xi^{p^{i}(q^{l+2d}-1)}=-1
\end{equation*}
The rest of the argument is the same as in that of Lemma \ref{lem_dp}.

{\bf Case 2: $d'=l-d$}. In this case, by considering the pairs $(i+d'h,\,i+dh)$ and $(i+d'h,\,i+dh+lh)$, the same argument exactly leads to the contradiction that $\gamma_{i+dh}\in\F_q$, while it should be that $\gamma_{i+dh}\in\F_{q^2}\setminus\F_q$.  This completes the proof.
\end{proof}

\begin{lemma}\label{lem_lmd}
We have $d'=l-d$.
\end{lemma}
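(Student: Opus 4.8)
The plan is to read off the conclusion from the structural work already done, rather than to start a new computation. By Lemma~\ref{lem_dp} together with the lemma immediately preceding this one, we already know that $N'(x)=b_i x^{p^i}+b_{i+lh}x^{p^{i+lh}}$ and that $d'\in\{d,\,l-d\}$. Hence the only thing that remains is to exclude the possibility $d'=d$, and my intention is to do this using the standing hypothesis of this part of Section~4.

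First I would recall the framing under which all the lemmas from Lemma~\ref{lem_gamma} onward are proved: we are assuming throughout that $\bS_d$ and $\bS_{d'}$ are isotopic \emph{but not strongly isotopic}, which is precisely what licensed the appeal to part~(ii) of Theorem~\ref{thm_alpha} and the introduction of the nonsquare $\alpha\in N_m(\bS_d)$. Now suppose, toward a contradiction, that $d'=d$. Then $BH(q,l,d')$ and $BH(q,l,d)$ are the very same presemifield, so the semifields $\bS_{d'}$ and $\bS_d$ coincide and are in particular strongly isotopic (via the identity autotopism). This contradicts the hypothesis that they are not strongly isotopic, and therefore $d'\ne d$; combined with $d'\in\{d,\,l-d\}$ this forces $d'=l-d$.

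To phrase the exclusion without appealing to reflexivity directly, I would route it through Theorem~\ref{thm_str}. Since $\bS_d$ is strongly isotopic to $BH(q,l,d)$ and $\bS_{d'}$ to $BH(q,l,d')$, and strong isotopism is an equivalence relation, $\bS_d$ and $\bS_{d'}$ are strongly isotopic if and only if $BH(q,l,d)$ and $BH(q,l,d')$ are, which by Theorem~\ref{thm_str} happens exactly when $d=d'$. As we are assuming $\bS_d$ and $\bS_{d'}$ are \emph{not} strongly isotopic, we again get $d\ne d'$, and thus $d'=l-d$.

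I do not expect a genuine obstacle here: the substantive work — narrowing $d'$ to the set $\{d,\,l-d\}$ and reducing $N'$ to a binomial — was already carried out in the earlier lemmas, and the final step is a short logical deduction. Should one instead wish to avoid invoking the global hypothesis at this point and argue purely computationally, the natural route would be to substitute the binomial $N'$ into Eqns.~\eqref{4.2}--\eqref{4.3} with $d'=d$ and match the coefficients of $x^{p^{i+dh}}y^{p^i}$ and $x^{p^{i+dh+lh}}y^{p^{i+lh}}$, aiming to reproduce the relations of Eqn.~\eqref{eqn_k} and contradict Lemma~\ref{lem_k}; but this is more delicate, since for $d'=d$ those monomials receive genuine contributions from $C_L$ (unlike the forbidden-difference monomials used in Lemma~\ref{lem_dp}), so it is cleaner to rely on Theorem~\ref{thm_str}.
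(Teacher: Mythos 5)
Your argument is logically valid for the lemma as literally stated: under the standing hypothesis of this part of Section~4 (the presemifields are isotopic but \emph{not} strongly isotopic), $d'=d$ is impossible simply because it would make $BH(q,l,d)$ and $BH(q,l,d')$ the same presemifield, hence strongly isotopic, and combined with Lemma~\ref{lem_dp} this forces $d'=l-d$. This is, however, a genuinely different route from the paper's. The paper assumes $d'=d$ and derives a contradiction directly from Eqn.~\eqref{4.1}, by comparing the coefficients of $x^{p^{i+dh}}y^{p^{i}}$ and $x^{p^{i+dh}}y^{p^{i+lh}}$, handling the genuine contribution $a_i-a_{i+lh}$ from $C_L$ (exactly the delicacy you flag at the end), forcing $a=0$, and finally reaching $1=-1$ after raising to the $(q^l+1)/2$-th power. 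The difference matters downstream: the paper's computation establishes the stronger fact that Eqn.~\eqref{4.1} has \emph{no} solution at all when $d'=d$, i.e.\ there is no isotopism of the form $(N,\,\alpha\star_d N,\,L)$ from $\bS_d$ to itself with $\alpha$ a nonsquare of $N_m(\bS_d)$. That nonexistence is precisely what is invoked after Theorem~\ref{thm_iso} (``the autotopisms of $BH(q,l,d)$ must be strong autotopisms by the proof of Theorem~\ref{thm_iso}'') to obtain the autotopism group order $2lh(q^l-1)(q^2-1)$. Your soft argument cannot supply this, because for $d'=d$ the two presemifields \emph{are} strongly isotopic, so the case hypothesis you lean on is unavailable in the autotopism setting. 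In short: your proof suffices for Theorem~\ref{thm_iso} itself and is cleaner there, but if it replaces the paper's proof one must either retain the computation elsewhere or give up the autotopism-group consequence.
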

\begin{proof}
Assume to the contrary that $d=d'$.   We compare the coefficients of $x^{p^{i+dh}}y^{p^{i}}$ and $x^{p^{i+dh}}y^{p^{i+lh}}$ in Eqn.\eqref{4.1} and get the equations
\begin{equation*}
\begin{array}{c}
(a_{i}-a_{i+lh})\ (a\beta \omega)^{p^i} = (\beta b_{i}^{q^d+1}+\beta^{q^l} b_{i+lh}^{q^{d+l}+q^l})\ \omega, \\
 (a_{i}-a_{i+lh})\ (b\beta \xi\omega)^{p^i} = (\beta b_{i}^{q^d} b_{i+lh}+\beta^{q^l} b_{i+lh}^{q^{d+l}} b_{i}^{q^l})\ \omega.
 \end{array}
\end{equation*}

If $a_i=a_{i+lh}$, the we would get a contradiction to Lemma \ref{lem_k} with $k=d$. Hence $a_i-a_{i+lh}\ne 0$. We now take quotient of both sides and plug in $b_{i+h}=b_i\xi^{p^i}\gamma_i$ to get
\[
(ab^{-1}  \xi^{-1} )^{p^i}=\frac{1 +(\beta b_i^{q^d+1})^{q^l-1}\xi^{p^i(q^{d+l}-1)}}
{ \xi^{p^i}\gamma_i+(\beta b_i^{q^d+1})^{q^l-1}\xi^{p^iq^{d+l}}\gamma_i^{q}}.
\]
Here we have used the fact that $\gamma_i^{q+1}=\xi^{-p^i(q^l+1)}$. With $t:=(\beta b_i^{q^d+1})^{q^l-1}\xi^{p^i(q^{l+d}-1)}$, we can rewrite it as $((ab^{-1})^{p^i}\gamma_i^q-1)\ t=-(ab^{-1})^{p^i}\gamma_i+1$. Recall that $\xi^{q^l+1}\in\F_q$, $\gamma_i\in\F_{q^2}\setminus\F_q$ and $l$ is even. Raising both sides to the $(q^l+1)$-st power, we deduce that $(1-\gamma_i^q(ab^{-1})^{p^i})^2-(1-\gamma_i(ab^{-1})^{p^i})^2=0$. It follows that  $(ab^{-1})^{p^i}\ (2-(ab^{-1})^{p^i}(\gamma_i+\gamma_i^q))=0$.

By Lemma \ref{lem_gamma}, we have $\gamma_i+\gamma_i^q=2(ab^{-1})^{p^i}\xi^{-p^i(q^l+1)}$, so
\[
2-(ab^{-1})^{p^i}\ (\gamma_i+\gamma_i^q)=2-2(ab^{-1})^{2p^i}\xi^{-p^i(q^l+1)}
=-2b^{-2p^i}\xi^{-p^i(q^l+1)}(a^2-b^2\xi^{q^l+1})^{p^i}.
\]
It is nonzero since $a^2-b^2\xi^{q^l+1}$ is a nonsquare in $\F_{q}$.  We thus must have $a=0$, and so $t=-1$. Recall that $\beta^{q^l-1}=\xi^{1-q^{l+d}}$, so it gives that $b_i^{(q^d+1)(q^l-1)}\xi^{(p^i-1)(q^{l+d}-1)}=-1$. Raising it to $\frac{q^l+1}{2}$-th power, the left hand side yields $1$. Since $l$ is even, we have $q^l\equiv 1\pmod{4}$ and $\frac{q^l+1}{2}$ is odd, so the right hand side remains $-1$: a contradiction. This completes the proof.
\end{proof}

We are now in a position to complete the proof. By comparing the coefficients of  $x^{p^iq^{l-d}}y^{p^iq^l}$ and $x^{p^iq^{2l-d}}y^{p^iq^l}$ in Eqn. \eqref{4.1}, we get
\begin{align*}
(a_{i+lh-dh}-a_{i+2lh-dh})\ (a\beta\omega)^{p^iq^{l-d}}&= (\beta b_i^{q^{l-d}}b_{i+lh}+\beta^{q^l}b_{i+lh}^{q^{2l-d}}b_{i}^{q^l})\ \omega, \\
(a_{i+lh-dh}-a_{i+2lh-dh})\ (b\beta\xi\omega)^{p^iq^{l-d}}&= (\beta b_{i+lh}^{q^{l-d}+1}+\beta^{q^l}b_{i}^{q^{2l-d}+q^l})\ \omega.
\end{align*}
By exactly the same argument as in the proof of Lemma \ref{lem_lmd}, we deduce that $a=0$.  The fact $a^2-b^2\xi^{q^l+1}$ is a nonsquare in $\F_{q}$ gives that $(-\xi^{q^l+1})^{(q-1)/2}=-1$, i.e., $\xi^{(q^l+1)(q-1)/2}=(-1)^{(q+1)/2}$. Recall that  $\xi^{q^{l+d}-1}=\beta^{1-q^l}$, $\beta$ is a nonsquare in $\F_{q^{2l}}$, so
\begin{align*}
-1=\beta^{(q^{2l}-1)/2}=\xi^{(q^l+1)(q^{l+d}-1)/2}=(-1)^{(q+1)(l+d)/2}=(-1)^{(q+1)/2}.
\end{align*}
Here we have used the fact that $l+d$ is odd. We thus conclude that $q\equiv 1\pmod{4}$.

We now explicitly construct an isotopism of the desired form between $BH(q,\,l,\,d)$ and $BH(q,\,l,\,l-d)$ in the case $q\equiv 1\pmod{4}$ and $l$ is an even integer larger than $2$. In this case,  $\omega$ is a nonsquare in $\F_{q^{2l}}$, where $\omega$ is as in Eqn. \eqref{2.1} with $\omega+\omega^{q^l}=0$. By Lemma \ref{lem_2.2}, we set $\beta:=w^{-1}$ without changing the isotopism class.
Take $\xi\in\F_{q^2}^*$ such that $\xi^q+\xi=0$. Then $\beta^{q^l-1}=\xi^{1-q^{l+d}}=-1$. We now set $a=0$, $b=1$, $\alpha=\kappa(a,b)$, and
\begin{align*}
L'(x)= \xi^{(q-3)/2}(x+x^{q^l})+\xi^{-1}(x-x^{q^l})^{q^{l-d}},\,\quad
N'(x)= x + \xi^{(q-1)/2} x^{q^l}	
\end{align*}
It is straightforward to check that $L'$ and $N'$ are both permutations over $\F_{q^{2l}}$, $a^2-b^2\xi^{q^l+1}=-\xi^{q^l+1}$ is a nonsquare of $\F_q$, and Eqn. \eqref{4.1} holds. From $L'=L^{-1}$ and $N'=K_{d'}^{-1}N^{-1}K_d$ we can reconstruct $L$ and $N$ such that $(N,\,\alpha\star_d N,\,L)$ is an isotopism between the semifields $\bS_d$ and $\bS_{l-d}$. This completes the proof of Theorem \ref{thm_iso}.
\vspace*{3mm}

Since the autotopisms of $BH(q,\,l,\,d)$ must be strong autotopism by the proof of Theorem \ref{thm_iso}, we deduce from Theorem \ref{thm_Nm} and Theorem \ref{thm_str} that the autotopism group of the presemifield $BH(q,\,l,\,d)$ has order $2lh(q^{l}-1)(q^2-1)$. It also follows from Theorem \ref{thm_iso} that, for fixed $q$ and $l>2$, the number of isotopism classes in the Budaghyan-Helleseth family is a half of the size of
$\{0<d<l:\,\gcd(l,d)=1\}$ in the case $q\equiv1\pmod{4}$ and $l$ is even, and is equal to the size of
\[
\{0<d<l:\,\gcd(l,d)=1,\quad\textup{$l+d$ is odd}\}
\]
in all the other cases. That is,
\begin{enumerate}
\item[(1)] in the case $q\equiv1\pmod{4}$ and $l$ is even, the number is $\phi(l)/2$;
\item[(2)] in the case $q\equiv3\pmod{4}$ and $l$ is even, the number is $\phi(l)$;
\item[(3)] in the case $l$ is odd,  the number is $\phi(l)/2$, since exactly one of $l-d$ and $d$ is even.
\end{enumerate}
Here, $\phi$ is the Euler totient function.

\vspace*{3mm}
\noindent\textbf{Acknowledgement}. 

This work was supported by National Natural Science Foundation of China under Grant No. 11771392. The authors would like to thank Dr. Longjiang Qu for discussions that led to this project.

\section*{Reference}
\scriptsize 
\setlength{\bibsep}{0.5ex}  

 \bibliographystyle{plain}

\end{document}